\documentclass{article} 
\usepackage{url}  
\usepackage{graphicx} 
\usepackage{subfig}

\usepackage{amsmath}
\usepackage{amsthm}
\usepackage{amsfonts}
\usepackage{amssymb}
\usepackage{nccmath}
\usepackage{url}
\usepackage[authoryear]{natbib}

\usepackage{algorithm}
\usepackage{algorithmic}

\newtheorem{thm}{Theorem}

\newtheorem{lem}[thm]{Lemma}
\newtheorem{example}[thm]{Example}

\newcommand{\RR}{ \mathbb{R} }

\newcommand{\spaceo}{\hspace{2 mm}}
\newcommand{\setsep}{ \spaceo | \spaceo}
\newcommand{\half}{\frac{1}{2}}

\DeclareMathOperator*{\Id}{Id}
\DeclareMathOperator*{\spanop}{span}
\DeclareMathOperator*{\diag}{diag}

\newcommand{\Abs}[1]{\left| #1 \right|}
\newcommand{\Set}[1]{\left\{ #1 \right\}}
\newcommand{\Brack}[1]{\left( #1 \right)}
\newcommand{\sqBrack}[1]{\left[ #1 \right]}
\newcommand{\inner}[2]{\left< #1 , #2 \right>}
\newcommand{\sqinner}[2]{\left[ #1 , #2 \right]}
\newcommand{\dinner}[2]{\left<\left< #1 , #2 \right>\right>}

\newcommand{\Exp}[1]{ \mathbb{E} #1}

\newcommand{\eps}{\varepsilon}

\title{On-Line Learning of Linear Dynamical Systems: \\ Exponential Forgetting in Kalman Filters}
\author{Mark Kozdoba, Jakub Marecek, \\ Tigran Tchrakian, and Shie Mannor} 
\date{}

\begin{document}
\maketitle

\begin{abstract}
The Kalman filter is a key tool for time-series forecasting and analysis.
We show that the dependence of a prediction of Kalman filter on the past is decaying exponentially, whenever the process noise is non-degenerate. 
Therefore, Kalman filter may be approximated by regression on a few 
recent observations.
Surprisingly, we also show that having some process noise is \textit{essential} 
for the exponential decay. 
With no process noise, it may happen that the forecast depends on all of the past uniformly, which makes forecasting more difficult.

Based on this insight, we devise an on-line algorithm for improper learning of a linear dynamical system (LDS), which considers only a few most recent observations. 
We use our decay results to provide the first regret bounds w.r.t.~to Kalman filters within  learning an LDS. That is, we compare the results of our algorithm to the best, in hindsight, Kalman filter for a given signal.  Also, the algorithm is practical: its per-update run-time is linear in the regression depth. 
\end{abstract}


\section{Introduction}
Linear Dynamical Systems (LDS) are a key standard tool in modeling and 
forecasting time series, with an exceedingly large number of applications.  
In forecasting with an LDS, typically one learns the parameters of the LDS 
first, using a maximum likelihood principle, and then uses Kalman 
filter to generate predictions. The two features that seem to contribute 
the most to the success of LDS in practice are the ability of 
LDS to model a wide range of behaviors, and the recursive nature of 
Kalman filter, which allows for fast, real-time forecasts via
a constant-time update of the previous estimate. 
On the other hand, a major difficulty with LDSs is that the process of 
learning system parameters, via expectation maximization (EM) or direct 
likelihood optimization, may be time consuming and prone to getting stuck in local maxima. 
We refer to 
\cite{anderson1979,WestHarrison,hamilton1994time,chui2017kalman} 
for book-length introductions.

Recently, there has been an interest in alternative, 
\textit{improper} learning approaches, where one approximates the 
predictions of LDSs by a linear function of 
a few past observations. The advantage of such approaches is that it 
convexifies the problem, i.e., learning the linear function amounts to a 
convex problem, which avoids the issues brought by the non-convex nature 
of the likelihood function. The convexification allows for on-line 
algorithms, which are typically fast and simple. A crucial advance of 
these recent approaches is the guarantee that  the predictions 
of the convexified, improper-learning algorithm are at least as good as the 
predictions of the proper one. One therefore avoids the long learning 
times and issues related to non-convexity associated with the classical algorithms, while 
maintaining the statistical performance. 

Leading examples of this approach  
\cite{anava13,arima_aaai,hazan2017online} 
utilise a framework 
of \textit{regret bounds} \cite{cesa2006prediction}
to provide guarantees on the performance of the convexifications. 
In this framework, one considers a sequence of observations $Y_t$, with or 
without additional assumptions. After observing $Y_0,\ldots,Y_t$, an 
algorithm  for improper learning produces a forecast $\hat{Y}_{t+1}$ of 
the next observation. 
Then, roughly speaking, 
one shows that the sum of errors of the forecast thus produced 
is close to the sum of errors of the best model (in hindsight)  
from within a certain class. It is said that the algorithm competes 
against a certain class. 

In this paper, we take several steps towards developing guarantees for an 
algorithm, which competes against Kalman filters. 
Specifically, we ask what conditions make it possible to model the 
predictions of Kalman filter 
as a regression of a few past observations? 
We show that for a natural, large, and well-known 
class of LDSs, the \textit{observable} LDSs, 
the dependence of Kalman filter on the past 
decays exponentially 
if the process noise of the LDS is 
non-degenerate. 
Consequently, predictions of such LDS can be modeled 
as auto-regressions. 
In addition, we show that at least some non-degeneracy
of the process noise is \textit{necessary} for the exponential decay. We provide an example with no process noise, where the dependence on the past 
does not converge exponentially. 

Next, based on the decay results, we give an on-line
algorithm for time-series prediction and prove regret bounds for it. 
The algorithm makes predictions in the form 
$\hat{y}_{t+1} = \sum_{i=0}^{s-1} \theta_i(t) Y_{t-i}$, where $Y_{t}$ 
are observations, and $\theta(t)\in\RR^s$ is the 
vector of auto-regression (AR) coefficients, which is updated by the algorithm in an on-line manner. 

For any LDS $L$, denote by $f_{L,t+1}$ the predicted value of $Y_{t+1}$ by 
Kalman filter corresponding to $L$, given $Y_t, \ldots, Y_0$. 
Denote by $E(T) = \sum_{t=1}^{T-1} \Abs{\hat{y}_{t+1}-Y_{t+1}}^2$
the total error made by the algorithm up to time $T$, and by 
$$E(L,T) = \sum_{t=1}^{T-1} \Abs{f_{L,t+1}-Y_{t+1}}^2$$ the total 
error made by Kalman filter corresponding to $L$. 
Let $S$ be any finite family of observable linear dynamical systems with non-degenerate process noise. We show that for an appropriate regression depth $s$, 
for any bounded sequence $\Set{Y_t}_{0}^T$ we have
\begin{equation}
 \frac{1}{T}E(T) \leq \frac{1}{T} \min_{L \in S} E(L,T) + 
 \frac{1}{T}C_{S} + \eps, 
\end{equation}
where $C_{S}$ is a constant depending on the family $S$. 
In words, up to an arbitrarily small $\eps$ given in advance, the average prediction error of the algorithm is as good or 
better than the average prediction error of the \textit{best} Kalman 
filter in $S$. We emphasize that while there is a dependence on $S$ in the 
bounds, via the constant $C_S$, the algorithm itself depends on $S$ 
only through the regression depth $s$. In particular, the algorithm does 
not depend on the cardinality of $S$, and the time complexity of each 
iteration is $O(s)$.

To summarize, our contributions are as follows: 
We show that the dependence of predictions of Kalman filters in a 
system with non-degenerate process noise is exponentially decaying and that  
therefore Kalman filters may be approximated by regressions on a 
few recent observations, cf. Theorem \ref{thm:lds_approximation}.   
We also show that the process noise is \textit{essential} for the 
exponential decay. We given an on-line prediction algorithm and prove the 
first regret bounds against Kalman filters, cf. Theorem 
\ref{thm:regret_bound}.   
Experimentally, we illustrate the performance on a single example in the 
main body of the text, and further examples in the supplementary material.

\section{Literature}
\label{sec:literature}


In this section, we review the relevant literature and place the current 
work in context. 

We refer to \cite{hamilton1994time} for an exposition on LDSs, Kalman 
filter, and the classical approach to learning the LDS parameters via tha 
maximum likelihood optimization. See also \cite{RG1999} for a survey 
of relations between LDSs and a large variety of other probabilistic 
models. 
A general exposition of on-line learning can be found in 
\cite{Hazanbook2016}. 

As discussed in the Introduction, we are concerned with improper learning, where we show that an alternative model can be shown to generate forecasts that are as good as Kalman filter, up to any given error. Perhaps the first example of an 
improper learning that is still used today is the 
moving average, or the exponential moving average 
\cite{gardner1985exponential}. In this approach,
predictions for a process -- of a possibly complex nature -- are made 
using a simple auto-regressive (AR) or AR-like model. 
This is very successful in a multitude of engineering applications.
Nevertheless, until recently, there were very few guarantees for the 
performance of such methods. 

In \cite{anava13},  the first guarantees regarding 
prediction of a (non-AR) subset of auto-regressive moving-average (ARMA) processes by AR processes were given, together with an algorithm for finding the 
appropriate AR. In \cite{arima_aaai}, these results were extended to 
a subset of autoregressive integrated moving average (ARIMA) processes, while at the same time the assumptions on the underlying ARMA model were relaxed. 

In this paper, we show that AR models 
may also be used to forecast as well as Kalman filters. 
One major difference between our results and the previous work is that we 
obtain approximation results on \textit{arbitrary} bounded sequences.
Indeed, regret results of \cite{anava13} and \cite{arima_aaai} only hold 
under the assumption that the data sequence was generated by a
particular fixed ARMA or ARIMA process.  Moreover, the constants in the 
regret bounds of \cite{anava13} and \cite{arima_aaai} depend on the generating model, and the guaranteed convergence  may be arbitrarily slow, even when the sequence to forecast is generated by appropriate model. 

In contrast, we show that up to an arbitrarily small error given in 
advance, AR($s$) will perform as well as Kalman filter on any bounded 
sequence. We also obtain approximation results in the more general 
case of bounded difference sequences.

Another related work is \cite{hazan2017online}, which addresses a 
different aspect of LDS approximation by ARs. In the case of LDSs with 
\textit{inputs}, building on known eigenvalue-decay estimates of Hankel 
matrices, it is shown that the influence of all past inputs 
may be effectively approximated by an AR-type model.
However, the arguments and the algorithms in \cite{hazan2017online} were 
not designed to address model noise. In particular, 
the algorithm of \cite{hazan2017online} makes predictions based on the 
whole history of inputs and on only one most recent observation, $Y_t$, 
and hence clearly can not compete with Kalman filters in situations with 
no inputs. We demonstrate this in the Experiments section. 

\label{runtimediscussion}
Also, note that while on-line gradient descent (OGD) is used throughout much recent work in forecasting  
\cite{anava13,hazan2017online,arima_aaai}, there are important differences in the per-iteration run-time, as well as the problems solved. For example, the OGD of \cite{hazan2017online} requires $O(t^3)$ operations to forecast at time $t$, as it does consider the whole history of $t$ inputs and an eigen-decomposition of an $t \times t$ matrix at each time $t$, although this could be pre-computed. 
Our Algorithm \ref{alg:OGD} has a liner per-iteration complexity in the regression depth $s$, which can be as small as  $s = 1$. Notably, its run-time at time $t$ does not depend on time $t$.

\section{Preliminaries} 
\label{sec:definitions}

As usual in the literature \cite{WestHarrison}, we define a linear system $L = (G,F,v,W)$ as:

\begin{eqnarray}
\phi_{t} = G h_{t-1} + \omega_t \\
Y_t = F' \phi_t + \nu_t,
\end{eqnarray}

where $Y_t$ are scalar observations, and $\phi_t \in \RR^{n\times1}$
is the hidden state. $G \in \RR^{n\times n}$ is the state transition 
matrix which defines the system dynamics, and $F \in \RR^{n\times1}$ 
is the observation direction. The process-noise terms $\omega_t$ 
and observation-noise terms $\nu_t$ are mean zero normal independent 
variables. For all $t\geq 1$ the covariance of $\omega_t$ is 
$W$ and the variance of $\nu_t$ is $v$. The initial state $\phi_0$ is a 
normal random variable with mean $m_0$ and covariance $C_0$.

For $t\geq 1$ denote 
\begin{equation}
m_t = \Exp{\Brack{\phi_t | Y_0,\ldots,Y_t}}, 
\end{equation}
and let $C_t$ be the covariance matrix of $\phi_t$ given 
$Y_0,\ldots,Y_t$. Note that $m_t$ is the estimate of the current hidden 
state, given the observations. Further, the central quantity of this paper
is 
\begin{equation}
f_{t+1} = \Exp{\Brack{Y_{t+1}|Y_t, \ldots,Y_0}} = F'G m_t.
\end{equation}
This is the forecast of the next observation, given the current data. 
The quantities $m_t$ and $f_{t+1}$ are known as Kalman Filter. 
In particular, in this paper we refer to the sequence $f_{t}$ as the 
Kalman filter associated with the LDS $L=(G,F,v,W)$. 

The Kalman filter satisfies the following recursive update equations: 
Set
\begin{eqnarray*}
a_t &=& G m_{t-1} \\
R_t &=& G C_{t-1} G' + W \\ 
Q_t &=& F'R_tF + v \\
A_t &=& R_t F  / Q_t
\end{eqnarray*}
Note that in this notation we have
\begin{equation*}
f_t = F' a_t. 
\end{equation*}
Then the update equations of Kalman filter are:
\begin{eqnarray}
m_t &=& a_t + A_t(Y_t - f_t) = A_t Y_t + (I - F \otimes A_t) a_t \label{eq:m_t_update}\\
C_t &=& R_t - A_t Q_t A'_t
\end{eqnarray}
where $x\otimes y$ is an $\RR^{n\times1} \rightarrow \RR^{n\times1}$ 
operator which acts by $z \mapsto \inner{z}{x}y = yx'z$. The matrix of 
$x\otimes y$ is given by the outer product $yx'$, where $x,y \in \RR^{n\times1}$.

An important property of Kalman Filter is 
that while $m_t$ depends on $Y_0,\ldots,Y_t$, the covariance matrix 
$C_t$ does not. Indeed, note that $R_t,Q_t,A_t,C_t$ are all deterministic 
sequences which do not depend on the observations.

We explicitly write the recurrence relation for $R_t$: 
\begin{equation}
\label{eq:R_recursion}
R_{t+1} = G\Brack{R_t - \frac{R_t F \otimes R_t F}{\inner{F}{R_t F} + v}}G' + W
\end{equation}
Also write for convenience 
\begin{equation}
\label{eq:a_t_update}
a_{t+1} = Gm_{t} = GA_t Y_t + G(I-F\otimes A_t)a_t. 
\end{equation}

A more explicit form of the prediction of $Y_{t+1}$ given 
$Y_t, \ldots,Y_0$, may be obtained by 
unrolling (\ref{eq:m_t_update}) and using (\ref{eq:a_t_update}):
\begin{eqnarray}
\Exp{\Brack{Y_{t+1}|Y_t, \ldots,Y_0}}   &=& f_{t+1} = F'a_{t+1} \quad \quad \label{thatexpintext}
\end{eqnarray}
\begin{eqnarray}
\quad  &=& F'GA_tY_t + F'G(I-F\otimes A_t)a_t  \\
  &=& F'GA_tY_t + F'G(I-F\otimes A_t)GA_{t-1}Y_{t-1} \nonumber\\
  &&    + F'G(I-F\otimes A_t)G(I-F\otimes A_{t-1})a_{t-1}.
\end{eqnarray}

In general, set $Z_t = G(I-F\otimes A_t)$ and $Z = G(I-F \otimes A)$. 
Chose and fix some $s\geq 1$. Then for any $t\geq s+1$, 
the expectation \eqref{thatexpintext} has the form displayed in 
Figure \ref{fig:exp}.


\begin{figure*}
\begin{equation}
\label{eq:base_prediction}
f_{t+1} = 
\underbrace{  F'GA_tY_t + 
F' \sum_{j=0}^{s-1} \sqBrack{ \Brack{\prod_{i=0}^j Z_{t-i}}GA_{t-j-1}Y_{t-j-1}} 
}_{AR(s+1)}
          + 
\underbrace{          
          F' \Brack{\prod_{i=0}^s Z_{t-i}} a_{t-s}.}_{
          \text{Remainder term}} 
\end{equation}
\caption{The unrolling of the forecast $f_{t+1}$.
The remainder term goes to zero exponentially fast 
 with $s$, by Lemma \ref{lem:state_magnitude_bound}.
}
\label{fig:exp}
\end{figure*}

Next, a linear system $L=(G,F,v,W)$ is said to be \textit{observable}, 
\cite{WestHarrison}, if
\begin{equation}
\spanop\Set{F, G'F,\ldots,{G'}^{n-1}F} = \RR^n.
\end{equation}
Roughly speaking, the pair $(G,F)$ is observable if the state can be 
recovered from a sufficient number of observations, in a noiseless situation. Note that 
if there were parts of the state that do not influence the observations, 
these parts would be irrelevant for forecast purposes. Thus we are only 
interested in observable LDSs. 

When $L$ is observable, it is known \cite{harrison1997convergence} that the sequences 
$C_t,R_t,Q_t,A_t$ converge. See also \cite{anderson1979,WestHarrison}.
We denote the limits by $C,R,Q$ and $A$ 
respectively. Moreover, the limits satisfy the recursions as equalities. In particular we have
\begin{equation}
\label{eq:R_equality}
R = G\Brack{R - \frac{RF \otimes RF}{\inner{F}{RF} + v}}G' + W. 
\end{equation}

Finally, an operator $P: \RR^n \rightarrow \RR^n$ is 
\textit{non-negative}, denoted $L \geq 0$, if $\inner{Px}{x} \geq 0$ for 
all $x \neq 0$, and is \textit{positive}, denoted $P>0$,  if 
$\inner{Px}{x} > 0$ for all $x \neq 0$. Note that $W,C_t,R_t,C,R$ are 
either covariance matrices or limits of such matrices, and thus are 
symmetric and non-negative.

\section{Exponential Decay and AR Approximation}
\label{sec:analysis}

In what follows, we denote by 
\begin{equation}
\sqinner{x}{y} = \inner{Rx}{y}, \spaceo \dinner{x}{y} = \inner{Wx}{y}
\end{equation}
the inner products induced by $R$ and $W$ on $\RR^n$, where $R$ is the limit of $R_t$ as described above. 
In particular, we set $U=G'$ and rewrite (\ref{eq:R_equality}) as 
\begin{equation}
\label{eq:R_inner_eq}
\sqinner{x}{y} = \sqinner{Ux}{Uy} - 
                 \frac{\sqinner{Ux}{F}{\sqinner{Uy}{F}}}{\sqinner{F}{F}+v} 
                 + \dinner{x}{y}.
\end{equation}

Observe that since $R = GCG' + W$, we have $R \geq W$, and in particular 
if $W>0$ then $R>0$. In other words, if $W>0$, then $\sqinner{\cdot}{\cdot}$ and $\dinner{\cdot}{\cdot}$ induce proper norms on $\RR^n$:
\begin{equation}
\label{eq:R_W_order_inner}
\sqinner{x}{x} \geq \dinner{x}{x} > 0 \mbox{ for all $x\neq 0$}.
\end{equation}

Next, consider the remainder term in the prediction equation 
(\ref{eq:base_prediction}), where we have replaced $Z_{t-i}$ with their limit values $Z$:
\begin{eqnarray*}
F'& \Brack{G(I - F \otimes A)}^{s+1} a_{t-s} \quad\quad\quad \\
& = 
\inner{F}{\Brack{G(I - F \otimes A)}^{s+1} a_{t-s}} \\ 
& = \inner{\Brack{(I - A \otimes F)U}^{s+1} F}{a_{t-s}}. 
\end{eqnarray*}

Let us now state and prove the key result of this paper: if $W>0$, then 
$\Brack{(I - A \otimes F)U}^s F$ converges to zero exponentially fast 
with $s$. The key to the proof will be to consider contractivity 
properties with respect to the norm induced by $\sqinner{\cdot}{\cdot}$, 
rather than with respect to the the default inner product. 

\begin{thm} 
\label{thm:main_contraction}
If $W>0$, then there is \newline $\gamma = \gamma(W,v,F,G) < 1$ 
 such that for every $x \in \RR^n$,  
\begin{equation}
\sqinner{(I - A \otimes F)Ux}{(I - A \otimes F)Ux} \leq \gamma \sqinner{x}{x}. 
\end{equation}
\end{thm}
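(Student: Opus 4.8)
The plan is to reduce the statement to a single algebraic identity that ties the map $T := (I - A\otimes F)U$ to the fixed-point equation \eqref{eq:R_inner_eq} for $R$, and then to close the remaining gap between the $R$- and $W$-norms by a generalized Rayleigh-quotient argument. The positivity of $W$ will be used only at the very last step.

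First I would write $T$ out explicitly in the $\sqinner{\cdot}{\cdot}$ language. Using $A = RF/Q$ with $Q = \sqinner{F}{F}+v$, the action $z \mapsto \inner{z}{A}F$ of $A\otimes F$, and $\inner{Ux}{A} = \sqinner{Ux}{F}/Q$ (by symmetry of $R$), one obtains
\begin{equation}
Tx = (I - A\otimes F)Ux = Ux - \frac{\sqinner{Ux}{F}}{\sqinner{F}{F}+v}\, F .
\end{equation}
The core step is then to expand $\sqinner{Tx}{Tx}$ by bilinearity and simplify via \eqref{eq:R_inner_eq}. Writing $c = \sqinner{Ux}{F}/(\sqinner{F}{F}+v)$, this gives $\sqinner{Tx}{Tx} = \sqinner{Ux}{Ux} - 2c\,\sqinner{Ux}{F} + c^{2}\sqinner{F}{F}$. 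The key observation is that \eqref{eq:R_inner_eq} with $x=y$ says precisely $\sqinner{Ux}{Ux} - \sqinner{Ux}{F}^{2}/(\sqinner{F}{F}+v) = \sqinner{x}{x} - \dinner{x}{x}$, so I would substitute this for the leading two terms and collect the remaining multiples of $\sqinner{Ux}{F}^{2}$. These collapse to a single negative multiple of $v$, yielding
\begin{equation}
\sqinner{Tx}{Tx} = \sqinner{x}{x} - \dinner{x}{x} - \frac{v\,\sqinner{Ux}{F}^{2}}{(\sqinner{F}{F}+v)^{2}} .
\end{equation}
Since $v \geq 0$, the last term is nonnegative and can be discarded, leaving the clean bound $\sqinner{Tx}{Tx} \leq \sqinner{x}{x} - \dinner{x}{x}$.

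Finally I would convert this into a contraction constant. Because $W>0$ we also have $R>0$ by \eqref{eq:R_W_order_inner}, so both $\sqinner{\cdot}{\cdot}$ and $\dinner{\cdot}{\cdot}$ are genuine inner products and the ratio $\dinner{x}{x}/\sqinner{x}{x}$ is a generalized Rayleigh quotient. Its infimum $\mu$ over $x\neq 0$ is the smallest generalized eigenvalue of the pencil $(W,R)$; it is attained by compactness of the $R$-unit sphere and satisfies $0 < \mu \leq 1$, where the upper bound follows from $R \geq W$. Setting $\gamma := 1-\mu \in [0,1)$ then gives $\sqinner{Tx}{Tx} \leq (1-\mu)\sqinner{x}{x} = \gamma\,\sqinner{x}{x}$, and $\gamma$ depends on $W,v,F,G$ only through the limit matrix $R$, as claimed.

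The main obstacle is the bookkeeping in the second step: recognizing that $T$ is tailored so that the innovation correction in \eqref{eq:R_inner_eq} enters with exactly the coefficient needed for the leading terms to cancel, and checking that after substitution the $\sqinner{Ux}{F}^{2}$ terms leave behind precisely $-v/(\sqinner{F}{F}+v)^{2}$ rather than a sign-indefinite quantity. Everything upstream (writing out $T$) and downstream (the Rayleigh-quotient estimate) is routine once this identity is in hand, and the non-degeneracy $W>0$ enters only to force $\mu>0$, hence $\gamma<1$, which matches the paper's theme that process noise is what drives the exponential decay.
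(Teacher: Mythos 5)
Your proposal is correct and follows essentially the same route as the paper's proof: write out $(I-A\otimes F)Ux$ in the $R$-inner product, expand $\sqinner{Tx}{Tx}$, substitute the fixed-point identity \eqref{eq:R_inner_eq} to obtain $\sqinner{Tx}{Tx}=\sqinner{x}{x}-\dinner{x}{x}-v\,\sqinner{Ux}{F}^{2}/(\sqinner{F}{F}+v)^{2}$, and then absorb $\dinner{x}{x}\geq\kappa\sqinner{x}{x}$ into a contraction factor $\gamma=1-\kappa$. Your only departure is cosmetic: where the paper invokes equivalence of finite-dimensional norms to get $\kappa$, you identify it as the smallest generalized eigenvalue of the pencil $(W,R)$, which is the same constant obtained slightly more explicitly.
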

\begin{proof}
Set 
\begin{equation}
y = \Brack{(I - A \otimes F)U}x. 
\end{equation}
Then
\begin{align}
\label{eq:Z_tag_action}
y &  = (I - A \otimes F)U x = Ux - \inner{A}{Ux}F\\ 
& = Ux - \frac{\sqinner{Ux}{F}}{\sqinner{F}{F}+v} F. 
\end{align}

Therefore we have
\begin{equation}
\label{eq:v_k_norm_rec}
\sqinner{y}{y} = \sqinner{Ux}{Ux} 
-2\frac{\sqinner{Ux}{F}^2}{\sqinner{F}{F}+v} + 
\frac{\sqinner{Ux}{F}^2 \sqinner{F}{F}}{\Brack{\sqinner{F}{F}+v}^2}. 
\end{equation}

In addition, by (\ref{eq:R_inner_eq}), 
\begin{equation}
\label{eq:v_k_norm_U_ricatti}
\sqinner{Ux}{Ux} = \sqinner{x}{x} + \frac{\sqinner{Ux}{F}^2}{\sqinner{F}{F}+v} - \dinner{x}{x}. 
\end{equation}
Combining (\ref{eq:v_k_norm_rec}) and (\ref{eq:v_k_norm_U_ricatti}), we obtain
\begin{align}
\label{eq:v_k_base_contractivity}
\sqinner{y}{y} =&  
\sqinner{x}{x} - \dinner{x}{x}
-\frac{\sqinner{Ux}{F}^2}{\sqinner{F}{F}+v}
\left( 1 - \frac{\sqinner{F}{F}}{\sqinner{F}{F}+v} \right) \notag \\
=& \sqinner{x}{x} - \dinner{x}{x}
-\frac{\sqinner{Ux}{F}^2}{\sqinner{F}{F}+v}
\frac{v}{\sqinner{F}{F}+v}. \end{align}
Equation (\ref{eq:v_k_base_contractivity}) immediately implies that 
$[x,x]$ is non-increasing. 
Recall that by (\ref{eq:R_W_order_inner}), $W$ is dominated by $R$. 
However, since both $R$ and $W$ define proper norms, by the equivalence of 
finite dimensional norms, the inverse inequality is also true: There 
exists $0<\kappa\leq 1$ such that 
\begin{equation}
\label{eq:inverse_R_W_equiv}
\dinner{x}{x} \geq \kappa \sqinner{x}{x} \mbox{ for all $x\neq 0$}.
\end{equation}
Therefore the decrease in (\ref{eq:v_k_base_contractivity}) must be 
exponential:
\begin{equation}
\sqinner{y}{y} \leq 
\sqinner{x}{x} - \dinner{x}{x} \leq 
(1 - \kappa)\sqinner{x}{x}.
\end{equation}
\end{proof}

It is of interest to stress the fact that Theorem \ref{thm:main_contraction} does 
not assume any contractivity properties of $G$. In particular, 
the very common assumption of the spectral radius of $G$ being bounded 
by $1$ is not required. 

Let us state and prove our main approximation result: 
\begin{thm}[LDS Approximation]
\label{thm:lds_approximation}
Let $L=L(F,G,v,W)$ be an observable LDS with $W>0$. 
\begin{enumerate}
\item 
For any $\eps >0$, and any $B_0 >0$, there is 
$T_0>0$, $s>0$ and $\theta \in \RR^s$, such that 
for every sequence $Y_t$ with $\Abs{Y_t}\leq B_0$,
and for every $t \geq T_0$, 
\begin{equation}
\label{eq:bounded_approx_in_thm_approx}
\Abs{f_{t+1} - \sum_{i=0}^{s-1} \theta_i Y_{t-i}} \leq \eps.
\end{equation}
\item 
For any $\eps,\delta >0$, and any $B_1 >0$, there is 
$T_0>0$, $s>0$ and $\theta \in \RR^s$, such that 
for every sequence $Y_t$ with $\Abs{Y_{t+1}-Y_t}\leq B_1$,
and for every $t \geq T_0$, 
\begin{equation}
\Abs{f_{t+1} - \sum_{i=0}^{s-1} \theta_i Y_{t-i}} \leq 
2\max\Brack{\eps, \delta \Abs{Y_t}}.
\end{equation}
\end{enumerate}  
\end{thm}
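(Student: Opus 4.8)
The plan is to read off the regression coefficients from the unrolled forecast \eqref{eq:base_prediction} by \emph{freezing} the time-varying gains at their limits, and then to control the two sources of error that this introduces. Concretely, for part (1) I would set $\theta_i = F' Z^i G A$ for $i = 0, \dots, s-1$, where $Z$ and $A$ are the limits of $Z_t$ and $A_t$ guaranteed by observability. With this choice the comparison of the coefficient of $Y_{t-k}$ in \eqref{eq:base_prediction}, namely $F'\Brack{\prod_{i=0}^{k-1} Z_{t-i}}G A_{t-k}$, against $\theta_k = F'Z^k G A$ makes the difference $f_{t+1} - \sum_{i=0}^{s-1}\theta_i Y_{t-i}$ split into (I) the remainder term $F'\Brack{\prod_{i=0}^{s} Z_{t-i}} a_{t-s}$ of \eqref{eq:base_prediction}, and (II) a finite sum of $s$ terms of the form $F'\sqBrack{\Brack{\prod_{i=0}^{j-1} Z_{t-i}}G A_{t-j} - Z^j G A} Y_{t-j}$, each arising from replacing a product of true gains by the corresponding product of limits.

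The heart of the argument is the exponential decay of the remainder, and this is exactly where Theorem \ref{thm:main_contraction} enters. Writing the limit form of the remainder as $\inner{M^{s+1}F}{a_{t-s}}$ with $M = (I - A\otimes F)U$ (the adjoint manipulation already carried out just before Theorem \ref{thm:main_contraction}), the contraction estimate gives $\sqinner{M^{s+1}F}{M^{s+1}F}\leq \gamma^{s+1}\sqinner{F}{F}$, i.e.\ geometric decay of $M^{s+1}F$ in the $R$-norm. By the equivalence of finite-dimensional norms used already in \eqref{eq:inverse_R_W_equiv}, this transfers to the Euclidean norm, so $\norm{M^{s+1}F} \leq C \gamma^{(s+1)/2}$, and Cauchy--Schwarz bounds the remainder by $C\gamma^{(s+1)/2}\norm{a_{t-s}}$. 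It remains to bound $\norm{a_{t-s}}$: since $\gamma<1$ forces the spectral radius of $Z$ (equal to that of $M$) to lie below $1$, the state recursion \eqref{eq:a_t_update} is eventually contractive, and unrolling it against a bounded driving sequence yields $\norm{a_{t-s}}\leq D B_0$ for all large $t$. Choosing $s$ large then makes term (I) smaller than $\eps/2$.

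With $s$ now fixed, term (II) is a sum of boundedly many ($s$) contributions, each a fixed finite product of gains that converges to its limit as $t\to\infty$; since $\Abs{Y_{t-j}}\leq B_0$, choosing $T_0$ large enough makes the total of (II) smaller than $\eps/2$, which proves part (1). It is essential that $s$ is selected \emph{before} $T_0$, so that the number of converging products in (II), and the length of the product inside the remainder that must be compared with $Z^{s+1}$, stay fixed while $t\to\infty$.

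For part (2) I would run the same decomposition but track the magnitude of $Y_t$ rather than assuming a uniform bound. Bounded differences give $\Abs{Y_{t-j}}\leq \Abs{Y_t} + jB_1$, and the same unrolling of \eqref{eq:a_t_update} now yields a state bound of the form $\norm{a_{t-s}}\leq D'\Abs{Y_t} + D''$, the first part coming from the level $\Abs{Y_t}$ and the second from the accumulated differences weighted by the geometric decay of $Z^k$. Feeding this into terms (I) and (II) separates each into a piece proportional to $\Abs{Y_t}$ and a constant piece; choosing $s$ and then $T_0$ large drives the proportional pieces below $\delta\Abs{Y_t}$ and the constant pieces below $\eps$, giving the claimed bound $2\max(\eps,\delta\Abs{Y_t})$. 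The main obstacle throughout is the bookkeeping in term (II): one must ensure the error from replacing time-varying gains by their limits does not grow with the regression depth, which is handled precisely by fixing $s$ first (so that only finitely many already-convergent products appear) and by exploiting the geometric smallness of $Z^k$ to keep the frozen products themselves under control.
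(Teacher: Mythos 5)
Your proposal is correct, and at the level of the theorem it follows essentially the paper's own route: the same frozen coefficients $\theta_j=\inner{F}{Z^jGA}$, the same split of the error into the $s$ coefficient-convergence terms plus the remainder of \eqref{eq:base_prediction}, the same reliance on Theorem \ref{thm:main_contraction}, and the same crucial quantifier order (fix $s$ first, then choose $T_0$). The one genuine difference is how the remainder is bounded. The paper isolates this as Lemma \ref{lem:state_magnitude_bound} and attacks the \emph{time-varying} product directly: it passes to the adjoints $P_{t-i}=Z_{t-i}'$, which are eventually $\rho'$-contractions in the $R$-norm because they converge to the contraction $P=Z'$, and unrolls $a_{t-s}$ all the way back to a fixed time, obtaining a bound $(\rho_L')^s c_L$ that is exponential in $s$ \emph{uniformly} over all sufficiently large $t$. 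You instead freeze the product at $Z^{s+1}$, bound $\inner{M^{s+1}F}{a_{t-s}}$ by the contraction estimate plus Cauchy--Schwarz and a separate state bound obtained from the spectral radius of $Z$ lying below one, and absorb the discrepancy $\prod_{i=0}^{s}Z_{t-i}-Z^{s+1}$ as a correction that vanishes as $t\to\infty$ for fixed $s$. This is sound precisely because of the quantifier order you emphasize, and your bookkeeping for the Lipschitz case (state bound $\norm{a_{t-s}}\leq D'\Abs{Y_t}+D''$ with $D''$ growing linearly in $s$ but multiplied by the geometric factor) mirrors the paper's appendix argument and yields the stated $2\max\Brack{\eps,\delta\Abs{Y_t}}$. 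What you lose relative to the paper is the standalone content of Lemma \ref{lem:state_magnitude_bound}: your freeze-and-correct version gives only a fixed-$s$ asymptotic statement, not the uniform-in-$t$ exponential bound $(\rho_L')^s c_{1,L}\Brack{\Abs{Y_t}+sB_1+c_{2,L}}$, which the paper states as a result in its own right; what you gain is a slightly more elementary argument that avoids the full unrolling of $\inner{x}{a_{t-s}}$ against the time-varying contractions.
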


We first prove the bound on the remainder term in the prediction equation
(\ref{eq:base_prediction}).

\begin{lem}[Remainder-Term Bound]
\label{lem:state_magnitude_bound}
Let $L=L(F,G,v,W)$ be an observable LDS with $W>0$. 
\begin{enumerate}
\item
If a sequence $Y_t$ satisfies $\Abs{Y_t}\leq B_0$ for all $t\geq 0$, then 
there are constants $\rho_L'<1$ and $c_L$ such that  for any $s>0$ and $t > s$,
\begin{equation}
\Abs{\inner{F}{\Brack{\prod_{i=0}^s Z_{t-i}} a_{t-s}}} \leq (\rho_L')^s c_L.
\end{equation}
\item 
If a sequence $Y_t$ satisfies $\Abs{Y_{t+1} - Y_t}\leq B_1$ for 
all $t\geq 0$, then there are constants $\rho'_L$ and $c_{1,L},c_{2,L}$ such that for all $s>0$ and $t > s$, 
\begin{equation}
\Abs{\inner{F}{\Brack{\prod_{i=0}^s Z_{t-i}} a_{t-s}}} \leq  (\rho_L')^s c_{1,L}\Brack{\Abs{Y_{t}} + sB_1 + c_{2,L}}. 
\end{equation}
\end{enumerate}
\end{lem}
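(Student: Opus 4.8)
The plan is to exploit the transpose identity already used in the text, namely that the standard adjoint of $Z_\tau = G(I - F\otimes A_\tau)$ is $(I - A_\tau \otimes F)U$, in order to move the whole operator product onto the fixed vector $F$. Writing $\Pi = \prod_{i=0}^s Z_{t-i}$, one has
\begin{equation*}
\inner{F}{\Pi\, a_{t-s}} = \inner{\Pi' F}{a_{t-s}}, \qquad \Pi' F = (I - A_{t-s}\otimes F)U\cdots(I - A_{t}\otimes F)U\,F .
\end{equation*}
Thus the remainder factors into a product of operators applied to $F$ --- each of which is a time-varying version of the contraction from Theorem~\ref{thm:main_contraction} --- paired against the filter state $a_{t-s}$. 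I would bound the two factors separately and combine them by Cauchy--Schwarz together with the equivalence between the Euclidean norm and the $R$-norm $\norm{x}_R^2 = \sqinner{x}{x}$, which is a proper norm since $R>0$ when $W>0$.

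For the operator product, Theorem~\ref{thm:main_contraction} gives $\norm{(I-A\otimes F)U}_R \le \sqrt{\gamma}<1$ for the \emph{limit} gain $A$. Since the system is observable, $A_\tau \to A$, so $(I - A_\tau\otimes F)U \to (I-A\otimes F)U$; by continuity of the $R$-operator norm, for any $\gamma'\in(\gamma,1)$ there is an index $N$ with $\norm{(I - A_\tau\otimes F)U}_R \le \sqrt{\gamma'}$ for all $\tau \ge N$. All but at most $N$ of the $s+1$ factors in $\Pi' F$ therefore contract by $\sqrt{\gamma'}$, and the remaining early factors are bounded in $R$-norm by a uniform constant $M$ (the convergent sequence $A_\tau$ is bounded). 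This yields $\norm{\Pi' F}_R \le M^{N}(\sqrt{\gamma'})^{\,s+1-N}\norm{F}_R \le (\rho'_L)^s c_L$ with $\rho'_L = \sqrt{\gamma'}$, uniformly in $t>s$, the constant absorbing the finitely many non-contracting factors and the norm-equivalence constants.

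It remains to control $a_{t-s}$. Here the relevant operator is $Z_\tau$ itself rather than its transpose, and $Z$ need not contract in the $R$-norm; however $\rho(Z) = \rho(Z')\le \norm{(I-A\otimes F)U}_R <1$, so there exist a norm $\norm{\cdot}_\ast$ and an index $N'$ with $\norm{Z_\tau}_\ast \le \beta<1$ for $\tau\ge N'$. Unrolling $a_{\tau+1} = Z_\tau a_\tau + GA_\tau Y_\tau$ from~(\ref{eq:a_t_update}) and summing the resulting geometric series gives, in Part~1, a uniform bound $\norm{a_{t-s}}\le c$ whenever $\Abs{Y_t}\le B_0$; in Part~2 one instead estimates $\Abs{Y_k}\le \Abs{Y_{t-s}} + \Abs{(t-s)-k}B_1$ for each driving term and, using $\Abs{Y_{t-s}}\le\Abs{Y_t}+sB_1$, obtains $\norm{a_{t-s}}\le c_{1,L}(\Abs{Y_t}+sB_1+c_{2,L})$. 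Combining these with the operator bound gives both claims.

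I expect the main obstacle to be the transfer of the limit contraction of Theorem~\ref{thm:main_contraction} to the time-varying factors: the clean statement concerns only the limiting operator, so the argument must use the convergence $A_\tau\to A$ to produce a \emph{uniform} contraction factor valid for all large indices, while separately absorbing the finitely many early factors (those with index below $N$) into the constant --- the case of $t-s$ small being the delicate one. A secondary subtlety is that boundedness of $a_{t-s}$ cannot be read off Theorem~\ref{thm:main_contraction} directly, since that result concerns the transpose $Z'$; one must instead invoke $\rho(Z)<1$ to produce an auxiliary norm in which the forward recursion for $a_\tau$ is contracting.
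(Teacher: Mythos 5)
Your proposal is correct, and its first half is essentially the paper's own argument: both of you transpose the product $\prod_{i=0}^s Z_{t-i}$ onto $F$, use $Z_\tau' = (I - A_\tau\otimes F)U$, and upgrade the limit contraction of Theorem~\ref{thm:main_contraction} to a uniform factor $\rho'<1$ for all late indices via $A_\tau\to A$, absorbing the finitely many early (non-contracting) factors into a constant --- indeed you are slightly more careful than the paper here, which simply restricts to $t-s-1>T_0$. Where you genuinely diverge is the treatment of $a_{t-s}$. The paper never asserts any contraction property for the forward operator $Z$: it unrolls $a_{t-s}$ via (\ref{eq:a_t_update}) \emph{inside} the inner product $\inner{x}{a_{t-s}}$ with $x=\Pi'F$, and re-transposes every operator product in that expansion back onto $x$, so the geometric decay that makes the series summable again comes from the single $R$-norm contraction of $P_\tau = Z_\tau'$; the result is a bound of the form $[x]$ times a convergent series, i.e.\ Cauchy--Schwarz in disguise, but using only one contraction estimate and one norm. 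You instead bound $\norm{a_{t-s}}$ as a standalone quantity, invoking $\rho(Z)=\rho(Z')\le\norm{Z'}_R<1$ and the standard fact that a matrix of spectral radius below one admits a norm making it a strict contraction, then conclude by Cauchy--Schwarz. Both routes are sound and produce the same constant structure, including in the Lipschitz case where both shift $\Abs{Y_k}$ to $\Abs{Y_t}+sB_1$ and re-sum; your version is more modular and yields a reusable uniform bound on the filter state, at the price of the auxiliary-norm construction (which you could avoid by noting that $\norm{Z'}_R<1$ is equivalent to $Z$ contracting in the norm induced by $R^{-1}$), while the paper's version keeps every estimate inside the one norm $[\cdot]$ delivered by Theorem~\ref{thm:main_contraction}. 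One detail both treatments gloss over equally: in the Lipschitz case the initial state ($a_{T_0-1}$ in the paper, your $a_{N'}$) is not uniformly bounded over admissible sequences, since $\Abs{Y_0}$ is unconstrained; it must likewise be expressed through $\Abs{Y_t}+O(s+t)B_1$ and absorbed using the geometric prefactor, which works but deserves an explicit line.
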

\begin{proof} 
Recall that $a_t$ satisfies the recursion (\ref{eq:a_t_update}),
\begin{equation}
\label{eq:lem_at_rec}
a_{t+1} = G(I - F \otimes A_t)a_t + Y_t A_t = Z_ta_t + Y_t G A_t. 
\end{equation}
Denote by $[x] = \sqinner{x}{x}^{\half}$ and 
and by $\Abs{x} = \inner{x}{x}^{\half}$
the norms induced by  
$\sqinner{\cdot}{\cdot}$ and $\inner{\cdot}{\cdot}$ respectively. 
Set $P = Z'$ and $P_t = Z_t'$.
By Theorem \ref{thm:main_contraction}, there is 
$\rho = \gamma^{\half}<1$ such that $P$ is a $\rho$-contraction with respect to $[\cdot]$. Fix some $\rho'$ such that $\rho<\rho'<1$. 
Since $P_t \rightarrow P$, there is some $T_1$ such that for all 
$t\geq T_1$, $P_t$ is a $\rho'$-contraction. In addition, let $T_2$ be 
such that $[GA-GA_t]\leq 1$ for all $t\geq T_2$. Set 
$T_0=\max\Brack{T_1,T_2}+1$.  Fix $s>0$ and set $t' = t-s-1$.
For $t'>T_0$, using (\ref{eq:lem_at_rec}) write $a_{t-s}$ as 
\begin{align}
\label{eq:thm_approx_ats_unroll1}
a_{t'+1} &= Y_{t'} GA_{t'} + 
\sum_{i=0}^{t'-T_0} \Brack{Y_{t'-i-1} \Brack{\prod_{j=0}^i Z_{t'-j}} GA_{t'-i-1} }
\nonumber \\
& + \Brack{\prod_{j=0}^{t'-T_0} Z_{t'-j}} a_{T_0-1}. 
\end{align}

Observe that if an operator $O'$ is a $\gamma$-contraction with respect to 
$[\cdot]$, then for any $x,y\in \RR^n$, 
\begin{align}
\label{eq:lem_norm_and_contract_inner}
\inner{y}{Ox} &= \inner{O'y}{x} \\ 
              &  \leq \Abs{O'y}\Abs{x} 
                \leq \gamma \mu [y]\Abs{x} \leq \gamma \mu^2 [y][x], \nonumber
\end{align}
where $\mu$ is the equivalence constant between $[\cdot]$ and $\Abs{\cdot}$.

For every $x \in \RR^n$ by (\ref{eq:thm_approx_ats_unroll1}) we have
\begin{align}
\label{eq:x_ats_inner}
\inner{x}{a_{t-s}} &=  \\
 &= Y_{t'} \inner{x}{GA_{t'}} +  \nonumber \\
 &   \sum_{i=0}^{t'-T_0} 
 Y_{t'-i-1} \inner{ \Brack{\prod_{j=i}^0 P_{t'-j}}x}{GA_{t'-i-1}} 
  \nonumber \\
& + \inner{\Brack{\prod_{j=t'-T_0}^{0} P_{t'-j}}x}{a_{T_0-1}}.  \nonumber
\end{align}
By the choice of $T_0$, as since the expansion in (\ref{eq:x_ats_inner}) is only up to $T_0$, every $P_{t'-j}$ in (\ref{eq:x_ats_inner}) is a 
$\rho'$-contraction and all $GA_{t'-j}$ satisfy $[GA-GA_{t'-j}]\leq 1$. 

Combining this with (\ref{eq:lem_norm_and_contract_inner}) and using triangle inequality, we obtain 
\begin{align}
\Abs{\inner{x}{a_{t-s}}} &\leq  \\
 &= \Abs{Y_{t'}} \mu^2 [x]\Brack{[GA]+1} +  \nonumber \\
 & + \sum_{i=0}^{t'-T_0} 
 \Abs{Y_{t'-i-1}} (\rho')^{i+1} \mu^2 [x]\Brack{[GA]+1}
  \nonumber \\
& +  (\rho')^{t'-T_0}\mu^2 [x][a_{T_0-1}].  \nonumber
\end{align}
Finally, choose $x = \Brack{\prod_{i=s}^0 P_{t-i}} F$. Note that 
$[x] \leq (\rho')^{s+1}[F]$.  Therefore, 
\begin{align}
\label{eq:lem_long_stuff_begin}
&\Abs{\inner{F}{\Brack{\prod_{i=0}^s Z_{t-i}} a_{t-s}}} = 
\inner{x}{a_{t-s}}   \\
& \leq (\rho')^{s+1} \Abs{Y_{t'}} \mu^2 [F]\Brack{[GA]+1} +  \\
&  + \sum_{i=0}^{t'-T_0} 
 (\rho')^{s+1}\Abs{Y_{t'-i-1}} (\rho')^{i+1} \mu^2 [F]\Brack{[GA]+1}
   \label{eq:lem_long_stuff_series}\\
& +  (\rho')^{s+1} (\rho')^{t'-T_0}\mu^2 [F][a_{T_0-1}].  
   \label{eq:lem_long_stuff_const_term}
\end{align}
Observe that the term $[a_{T_0-1}]$ in (\ref{eq:lem_long_stuff_const_term}) 
is a constant, independent of $t$, and that the series in 
(\ref{eq:lem_long_stuff_series}) are summable w.r.t $t'$. Therefore, in the bounded 
case $\Abs{Y_t} \leq B_0$, the proof is complete. 

In the Lipschitz case, for every $i>0$, we have 
\begin{equation}
\Abs{Y_{t'-i-1}} \leq \Abs{Y_{t'}} + (i+1)B_1. 
\end{equation}
Substituting this into 
(\ref{eq:lem_long_stuff_begin})-(\ref{eq:lem_long_stuff_const_term}), and observing that the resulting 
series are still summable, we obtain 
\begin{equation}
\Abs{\inner{F}{\Brack{\prod_{i=0}^s Z_{t-i}} a_{t-s}}} \leq  
(\rho')^s c_{1}\Brack{\Abs{Y_{t'}} + c_{2}}. 
\end{equation}
Thus using 
\begin{equation}
\Abs{Y_{t'}} \leq \Abs{Y_{t}} + sB_1, 
\end{equation}
completes the proof in the Lipschitz case. 
\end{proof}

We now prove Theorem \ref{thm:lds_approximation}. 
\begin{proof}
Recall that $f_{t+1}$ is given by (\ref{eq:base_prediction}). 
Fix some $s>0$ and set $\theta_0 = \inner{F}{GA}$, 
and $\theta_{j+1} = \inner{F}{Z^{j+1}GA}$ for $j=0,\ldots,s-1$. 
Note that $\theta \in \RR^{s+1}$ and $s$ here corresponds to $s+1$ in the 
statement of the Theorem. 
Set also $r_t = \inner{F}{GA_t}$ and for $j\geq 0$, 
$r_{t-j-1} = \inner{F}{\Brack{\prod_{i=0}^j Z_{t-i}}GA_{t-j}}$.
Clearly $r_t \rightarrow \theta_0$ with $t$ and 
$r_{t-j-1} \rightarrow \theta_{j+1}$ for every fixed $j$. 
Next, using Lemma \ref{lem:state_magnitude_bound}, the discrepancy between $f_{t+1}$ and the $\theta$ predictor is given by 
\begin{align}
\label{eq:lds_approx_thm_abs_bound_bounded_case}
&\Abs{f_{t+1} - \sum_{j=0}^s Y_{t-j} \theta_j} \leq  \\
&\Abs{Y_t} \Abs{r_t - \theta_0} + \sum_{j=0}^{s-1} \Abs{Y_{t-j-1}} \Abs{r_{t-j-1} - \theta_{j+1}} 
+ (\rho_L')^s c_L  \nonumber
\end{align}
in the bounded case. In this case, therefore, choosing regression depth $s$ large enough 
so that $(\rho_L')^s c_L \leq \eps/2$ and $T_0$ large enough so that 
for all $t\geq T_0$, $\Abs{ r_{t-j-1} - \theta_{j+1} } \leq \frac{\eps}{2sB_0}$ for all $j\leq s$, suffices to conclude the proof. 
The proof of the Lipschitz case follows similar lines and is given in the Supplementary Material due to space constraints. 
\end{proof}

To conclude this section, we discuss the relation between exponential 
convergence and the non-degenerate noise assumption, $W>0$. 
Note that the crucial part of Theorem \ref{thm:main_contraction}, 
inequality (\ref{eq:inverse_R_W_equiv}), holds if and only if we can 
guarantee that $\dinner{x}{x}>0$ for every $x$ for which 
$\sqinner{x}{x}>0$. In particular, this holds when $W>0$ -- that is, 
the noise is full dimensional. We now demonstrate that at least some 
noise is \textit{necessary} for the exponential decay to hold.

Consider first a one dimensional example. 
\begin{example}
With $n=1$, assume that $Y_t$ are generated by an LDS with $G=F=1$, $W=0$ 
and some $v>0$. Assume that the true process starts from a deterministic 
state $m_{0,ture} >0$. Since we do not know $m_{0,true}$, we start the 
Kalman filter with $m_0 = 0$ and initial covariance $C_0 = 1$. 
\end{example}
In this case, clearly the observations $Y_t$ are independent samples
of a fixed distribution with mean $m_{0,true}$ and variance $v$. 
The Kalman filter in this situation is equivalent to a Bayesian 
mean estimator with prior distribution $N(0,C_0=1)$. From general 
considerations, it follows that $R_t \rightarrow R = 0$ with $t$. Indeed, 
if we start with $C_0 = 0$, then we have $R_t = 0$ for all $t$. 
Since the limit $R$ does not depend on the initialization, 
\cite{harrison1997convergence}, we have $R=0$ for every initialization. 
As a side note, in this particular case it can be shown, either via 
the Bayesian interpretation or directly, that $R_t$ decays as $1/t$ 
(that is, $t R_t \rightarrow const$, with $t$). 
Now, note that $Z_t = 1 - \frac{R_t}{R_t + v} = \frac{v}{R_t + v} \rightarrow 1$, 
and that for any fixed $j>0$, $A_{t-j} \rightarrow 0$ as $t$ grows. 
Next, for fixed $s>0$, consider the prediction equation  
(\ref{eq:base_prediction}). On the one hand, we know that $f_{t+1}$ 
converges to $m_{0,true}>0$ in probability. This is clear for instance 
from the Bayesian estimator interpretation above. On the other hand,
the coefficients of all $Y_{t-j}$ in (\ref{eq:base_prediction}) converge 
to $0$. It follows therefore, that the remainder term in 
(\ref{eq:base_prediction}), $F' \Brack{\prod_{i=0}^s Z_{t-i}} a_{t-s}$, 
converges in probability to $m_{0,true}$ as $t \rightarrow \infty$. In 
particular, the remainder term does \textit{not} converge to $0$. 
This is in sharp contrast with the exponential convergence of this term 
to zero in the $W>0$ case, as given by Lemma \ref{lem:state_magnitude_bound}. 

The above example can be generalized as follows:
\begin{example}
In any dimension $n$, let $(G,F)$ define an LDS such that $G$ is a 
rotation, and such that $G,F$ is observable. Again choose $W=0$ and $v>0$.  
As before, let the true process start from a state $m_{0,true} \neq 0$ and 
start the filter with $m_0 = 0$ and $C_0 = \Id$.
\end{example}
Considerations similar to those of the previous example imply that $R_t 
\rightarrow 0$ but $f_{t+1}$ does not. Consequently, the remainder term 
will not converge to zero. 

\section{An Algorithm and Regret Bounds}
\label{sec:algo}

In this section, we 
introduce our prediction algorithm and prove the associated regret bounds. 
Our on-line algorithm maintains a 
state estimate, which is represented by the regression 
coefficients $\theta\in \RR^s$, where $s$ is the regression depth, a parameter of the algorithm.  
At time step $t$, the algorithm first produces a prediction of the 
observation $Y_t$, using the current state $\theta$ and previous 
observations, $Y_{t-1},\ldots,Y_0$. 
Specifically, we will predict $Y_t$ by 
\begin{equation}
\label{eq:y_hat_definition}
\hat{y}_t(\theta) = \sum_{i=0}^{s-1} \theta_{i} Y_{t-i-1}. 
\end{equation}
After the prediction is made, the true observation $Y_t$ is revealed to 
the algorithm, and a loss associated with the prediction is computed. 
Here we consider the quadratic loss for simplicity:  
We define $\ell(x,y)$ as $(x-y)^2$. The loss function at time $t$ will be given by 
\begin{align}
\label{eq:loss}
\ell_t(\theta) := \ell(Y_t,\hat{y}_t(\theta)). 
\end{align}
In addition, the state is updated. 
We use the general scheme of 
on-line gradient decent algorithms, \cite{Zinkevich2003}, where the update goes against the direction of the gradient of the current loss. In addition, 
it is useful to restrict the state to a bounded domain.
We will use 
a Euclidean ball of radius $D$ as the domain, where $D$ is a parameter of 
the algorithm. We denote this domain by 
$\mathcal{D} = \Set{x\in \RR^s \setsep |x| \leq D}$ and denote by $\pi_{\mathcal{D}}$ the Euclidean projection 
onto this domain. 
If the gradient step takes the state outside of the domain, the state is 
projected back onto $\mathcal{D}$. 
The pseudo-code is presented in Algorithm \ref{alg:OGD}, where the
gradient $\nabla_{\theta} \ell_t(\theta)$ of the cost at $\theta$  at time $t$ is given by
\begin{align}
\label{eq:nabla}
& -2\Brack{Y_t - \sum_{i=0}^{s-1} \theta_i Y_{t-i-1}}
\Brack{Y_{t-1},Y_{t-2},\ldots, Y_{t-s}}.
\end{align}

\begin{algorithm}
   \caption{On-line Gradient Descent}
   \label{alg:OGD}
\begin{algorithmic}[1]
   \STATE {\bfseries Input:} Regression length $s$, domain bound $D$. \\
          \spaceo \spaceo Observations $\Set{Y_t}_0^{\infty}$, 
          given sequentially. \\
   \STATE Set the learning rate $\eta_t = t^{-\half}$. 
   \STATE Initialize $\theta_s$ arbitrarily in $\mathcal{D}$. 
   \FOR {$t = s$ {\bfseries to}  $\infty$}
   \STATE  Predict $\hat{y}_t = \sum_{i=0}^{s-1} \theta_{t,i} Y_{t-i-1}$
   \STATE  Observe $Y_t$ and compute the loss $\ell_t(\theta_t)$  of \eqref{eq:loss}
   \STATE  Update $\theta_{t+1} \gets \pi_{\mathcal{D}}\Brack{ 
                           \theta -  \eta_t \nabla \ell_t(\theta_t) }$ using \eqref{eq:nabla}
   \ENDFOR
\end{algorithmic}
\end{algorithm}

Note a slight abuse of notation in 
Algorithm \ref{alg:OGD}: the vector 
$\theta_t\in \RR^s$ denotes the state at time $t$, while 
in \eqref{eq:y_hat_definition} and elsewhere in the text, $\theta_i$ 
denotes the scalar coordinates of $\theta$. Whether the vector or the 
coordinates are considered will always be clear from context. 

For any LDS $L$, let $f_t(L)$, defined by (\ref{eq:base_prediction}), be 
the prediction of $Y_t$ that Kalman filter associated with $L$ makes, 
given $Y_{t-1},\ldots,Y_0$. We start all filters with the initial state 
$m_0 =0$, and initial covariance $C_0 = \Id_s$, the $s\times s$ identity matrix. 
Let $S$ be any family of LDSs. Then for 
any sequence $\Set{Y_t}_{0}^T$, the quantity
\begin{equation}
\sum_{t=0}^T \ell(\theta_t) - 
\min_{L \in S} \sum_{t=0}^T \ell(Y_t,f_{t}(L)),
\end{equation}
where $\theta_t$ are the 
sequence of states produced by Algorithm \ref{alg:OGD}, is called the 
\textit{regret}. As discussed in the introduction, 
$\sum_{t=0}^T \ell(\theta_t)$ is the total error incurred by the 
algorithm, and $\min_{L \in S} \sum_{t=0}^T \ell(Y_t,f_{t}(L))$ is the 
loss of the best (in hindsight) Kalman filter in $S$. Therefore, small regret 
means that the algorithm performs on sequence $\Set{Y_t}_{0}^T$ as well 
as the best Kalman filter in $S$, even if we are allowed to select that 
Kalman filter in hindsight, after the whole sequence is revealed. 

In the Supplementary Material, we prove 
the following bound on the regret of Algorithm~\ref{alg:OGD}:
\begin{thm} 
\label{thm:regret_bound}
Let $S$ be a finite family of LDSs, such that every 
$L=L(F,G,v,W) \in S$, is observable and has $W>0$. Let $B_0$ be given. For any $\eps>0$,
there are $s$,$D$, and $C_S$, such that the following holds: \\

For every sequence $Y_t$ with $\Abs{Y_t}\leq B_0$, 
if $\theta_t$ is a sequence produced by Algorithm \ref{alg:OGD} with 
parameters $s$ and $D$, then for every $T>0$, 
\begin{equation}
\sum_{t=0}^T \ell_t(\theta_t) - \min_{L \in S} \sum_{t=0}^T \ell(Y_t,f_{t}(L)) \leq 
C_S + 2(D^2 + B_0^2) \sqrt{T} +  \eps T.
\end{equation}
\end{thm}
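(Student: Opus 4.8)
The plan is to combine the classical regret guarantee for on-line gradient descent with the AR-approximation result of Theorem \ref{thm:lds_approximation}. Algorithm \ref{alg:OGD} runs OGD on the convex quadratic losses $\ell_t$ over the ball $\mathcal{D}$ of radius $D$, so the first ingredient is Zinkevich's analysis: each $\ell_t$ is convex in $\theta$, and on the bounded domain $\mathcal{D}$ with $\Abs{Y_t}\leq B_0$ its gradient \eqref{eq:nabla} is uniformly bounded in terms of $B_0$ and $D$. With $\eta_t = t^{-1/2}$, and using $\sum_{t\leq T}\eta_t \leq 2\sqrt{T}$, this yields a regret against the best fixed comparator of the form
\[
\sum_{t} \ell_t(\theta_t) - \min_{\theta \in \mathcal{D}} \sum_{t} \ell_t(\theta) \leq 2(D^2 + B_0^2)\sqrt{T}.
\]
This controls the algorithm against the best fixed vector in $\mathcal{D}$; it then remains to relate this comparator to the best Kalman filter in $S$.

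For the second ingredient, fix $\eps$ and apply Theorem \ref{thm:lds_approximation}(1) once for each of the finitely many $L \in S$, with an accuracy $\eps'$ to be chosen later. This produces, for each $L$, a depth $s_L$, a threshold $T_0^L$, and coefficients $\theta^L$ with $\Abs{f_{t+1}(L) - \sum_i \theta^L_i Y_{t-i}} \leq \eps'$ for all $t \geq T_0^L$ and all sequences bounded by $B_0$. Since $S$ is finite I set $s = \max_L s_L$ (padding each $\theta^L$ with zeros), $T_0 = \max_L T_0^L$, and take the radius $D \geq \max_L \Abs{\theta^L}$ so that every comparator lies in $\mathcal{D}$. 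In particular, for the minimizer $L^\ast = \argmin_{L \in S} \sum_t \ell(Y_t, f_t(L))$ we have $\theta^{L^\ast} \in \mathcal{D}$, hence $\min_{\theta \in \mathcal{D}} \sum_t \ell_t(\theta) \leq \sum_t \ell_t(\theta^{L^\ast})$.

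The third step converts closeness of predictions into closeness of losses. Writing $u_t = Y_t - \hat{y}_t(\theta^{L^\ast})$ and $w_t = Y_t - f_t(L^\ast)$, the per-step gap is $\Abs{u_t^2 - w_t^2} = \Abs{u_t - w_t}\,\Abs{u_t + w_t} = \Abs{f_t(L^\ast) - \hat{y}_t(\theta^{L^\ast})}\,\Abs{u_t + w_t}$. For $t \geq T_0$ the first factor is at most $\eps'$, while the second is bounded by a constant $M = M(B_0,D,s)$, since $Y_t$, $\hat{y}_t(\theta^{L^\ast})$ and $f_t(L^\ast)$ are all bounded (the last because it lies within $\eps'$ of the bounded AR prediction). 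Summing, the losses of $\theta^{L^\ast}$ exceed those of $f(L^\ast)$ by at most $\eps' M T$ over the tail, plus a constant over the initial block $t < T_0$; I absorb this transient constant, together with the OGD additive constant, into $C_S$. Choosing $\eps' \leq \eps/M$ turns the tail contribution into $\eps T$, and chaining the three inequalities gives exactly the claimed bound.

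The main obstacle is the interlocking of the parameter choices: the depth $s$, and with it the gradient bound and the radius $D$, are determined by the target accuracy $\eps'$, while $\eps'$ must in turn be small relative to the constant $M$ that itself depends on $s$ and $D$. The proof must therefore fix the quantities in the right order --- first $\eps$, then $s$ and the $\theta^L$, then $D$ large enough to contain all comparators, then $M$, and finally $\eps' \leq \eps/M$ --- and verify that every bound holds uniformly over all admissible sequences and all $T$ at once. This uniformity is precisely what the adversarial-sequence form of Theorem \ref{thm:lds_approximation} supplies. A secondary point requiring care is that both $M$ and the OGD gradient bound are genuinely independent of $T$, so that the only $T$-dependent terms in the final estimate are the advertised $2(D^2+B_0^2)\sqrt{T}$ and $\eps T$.
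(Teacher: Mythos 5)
Your proposal follows the same route as the paper's own proof: Zinkevich's OGD guarantee against the best fixed $\theta\in\mathcal{D}$, a uniform application of Theorem \ref{thm:lds_approximation} over the finite family $S$ (taking $s=\max_L s_L$, zero-padding, $T_0=\max_L T_0^L$, $D\geq\max_L\Abs{\theta^L}$), and a conversion of the per-step prediction error into a per-step loss gap, with the transient $t<T_0$ absorbed into $C_S$. On the conversion step you are in fact more explicit than the paper, which simply asserts a per-step loss gap of $2B_0\eps$, silently treating the cross term $\Abs{2Y_t-f_t(L)-\hat{y}_t(\theta_L)}$ as if it were at most $2B_0$.

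The one genuine weak point is your resolution of the circularity that you yourself flag. You propose to fix the quantities in the order ``first $\eps$, then $s$ and the $\theta^L$, then $D$, then $M$, and finally $\eps'\leq\eps/M$,'' but $s$ and the $\theta^L$ are \emph{outputs} of Theorem \ref{thm:lds_approximation} applied at accuracy $\eps'$, so they cannot be fixed before $\eps'$ is; as stated, the ordering is still circular. The clean fix is to make $M$ independent of $s$, $D$ and $\eps'$ altogether: bound the cross term by $2B_0+2\sup_t\Abs{f_t(L)}+\eps'$, using the fact that on any $B_0$-bounded sequence the Kalman prediction $f_t(L)$ is bounded by a constant depending only on $L$ and $B_0$ (from the recursion $a_{t+1}=Z_ta_t+Y_tGA_t$, where the $Z_t$ are eventually contractions by Theorem \ref{thm:main_contraction}), rather than bounding $f_t(L^\ast)$ through the AR predictor as you do, which is what forces $M$ to depend on $s$ and $D$. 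With $M=M(S,B_0)$ fixed first, the choice $\eps'=\eps/M$ is unambiguous and the rest of your argument goes through verbatim; an alternative repair is to note that the coefficients $\theta_j=\inner{F}{Z^jGA}$ decay geometrically, so $D$ stays bounded and $s$ grows only logarithmically as $\eps'\to 0$, whence $\eps' M(\eps')\to 0$.
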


Due to the limited space in the main body of the text, 
we describe only the main ideas of the proof here.  
Similarly to other proofs in this domain, it consists of two steps. In the 
first step we show that 
\begin{equation}
\label{eq:regret_sketch_st1}
\sum_{t=0}^T \ell_t(\theta_t) - \min_{\phi \in \mathcal{D}} \sum_{t=0}^T 
\ell(Y_t,\hat{y}_t(\phi)) \leq 2(D^2 + B_0^2) \sqrt{T}. 
\end{equation}

\begin{figure*}[tbh!]
\includegraphics[page=102, width=\textwidth]{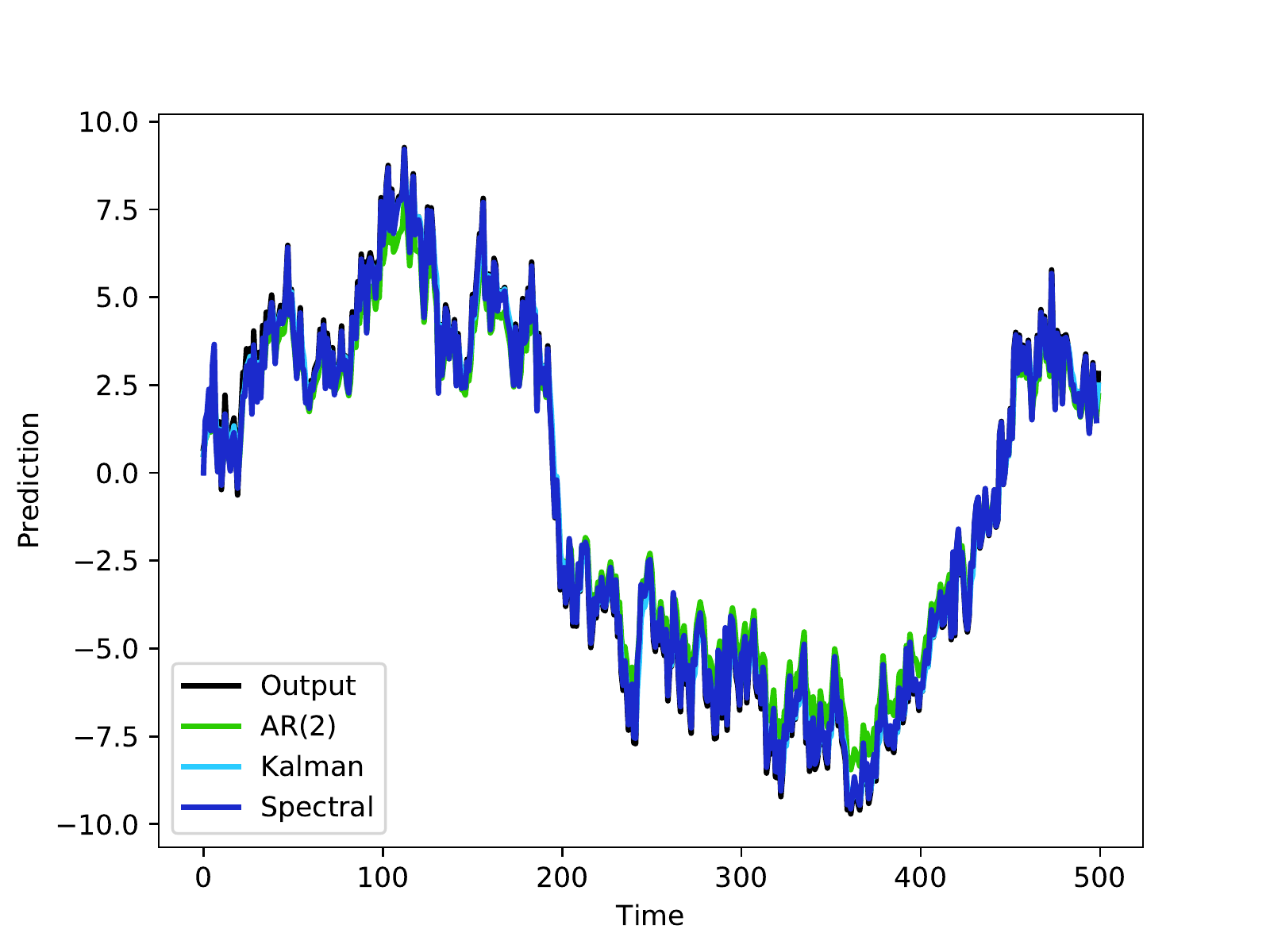}     
    \caption{The error of AR($2$) compared against  Kalman filter, last-value prediction, and spectral filtering in terms of the mean and standard deviation over $N = 100$ runs on Example~\ref{HazanEx}.
    }\label{fig1brief}
\end{figure*}

\begin{figure*}[tbh!]
        \includegraphics[page=3, width=\textwidth]{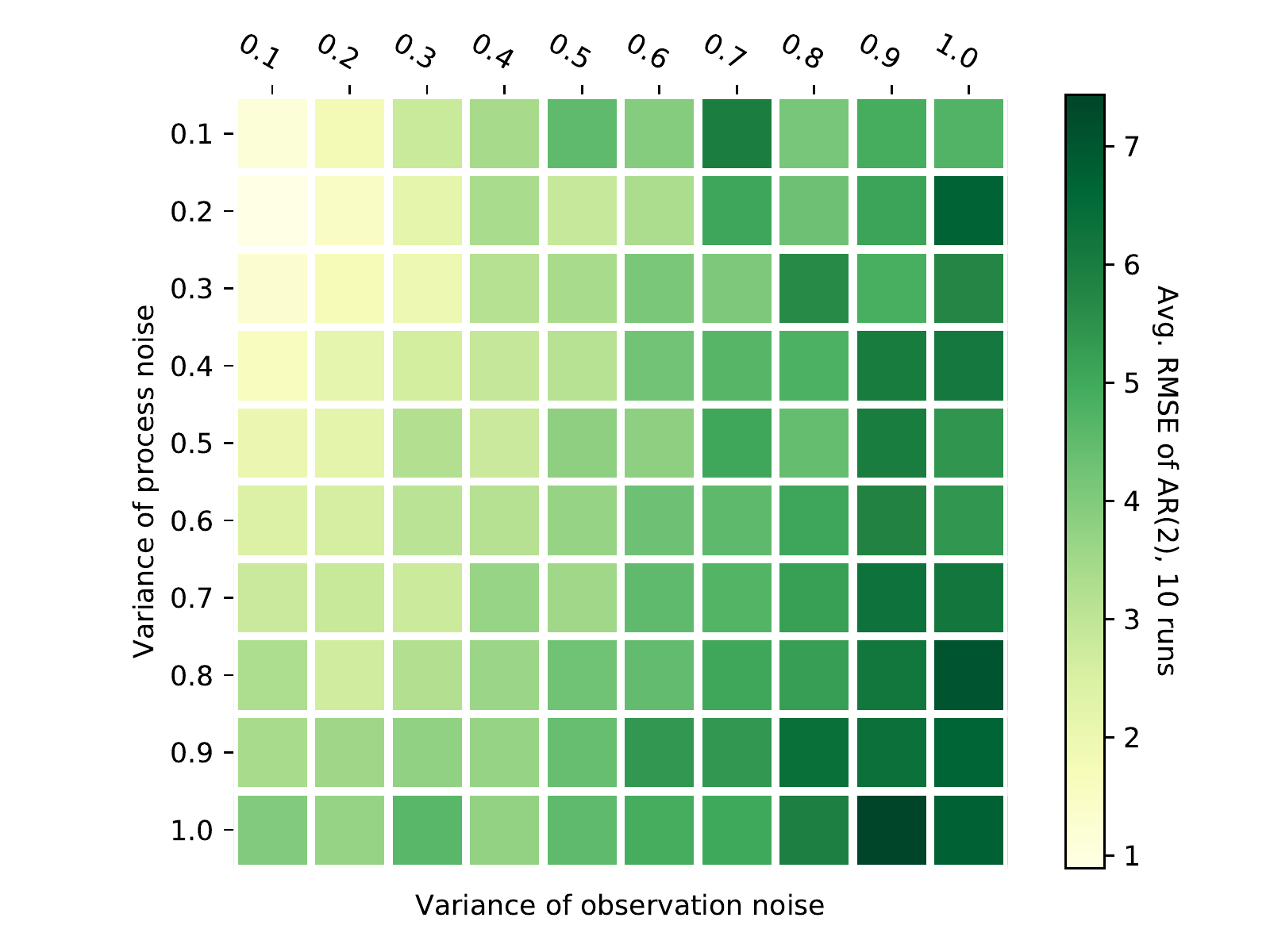}        
\caption{The ratio of the errors of Kalman filter and AR($2$) on Example~\ref{HazanEx}
indicated by colours as a function 
of $w, v$ of process and observation noise, on the vertical and horizontal axes, resp. 
Origin is the top-left corner.
    }\label{figNoisebrief}
\end{figure*}

\begin{figure*}[tbh!]
\includegraphics[page=4, width=\textwidth]{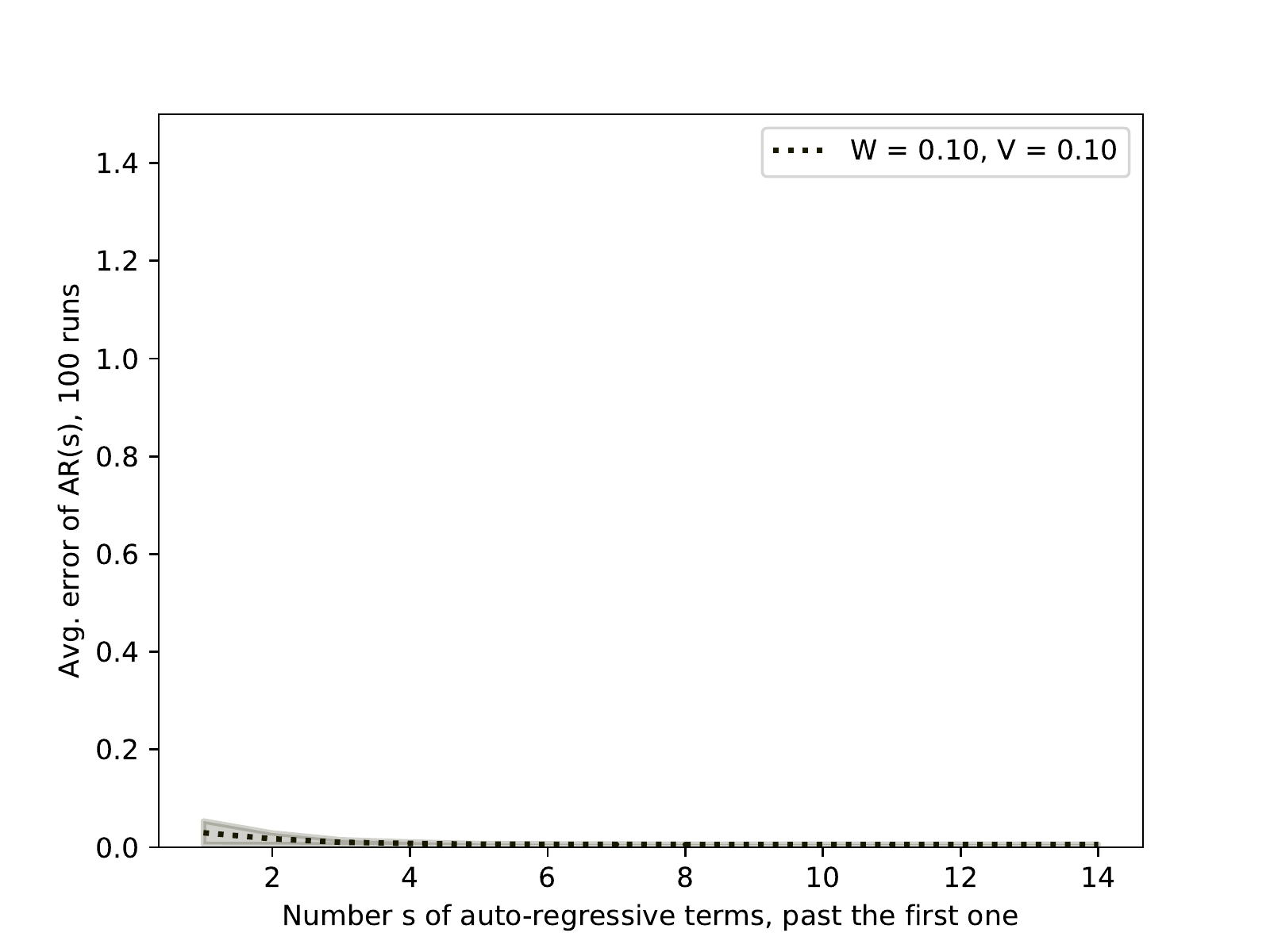}  
\caption{
The error of AR($s + 1$) as a function of $s + 1$,
in terms of the mean and standard deviation 
over $N = 100$ runs on Example ~\ref{HazanEx},
for 4 choices of $W, v$ of process and observation noise, respectively. 
    }\label{figNoise2brief}
\end{figure*}

This means that Algorithm \ref{alg:OGD} performs as well as the best in 
hindsight \textit{fixed} state vector $\phi$. This follows from the 
general results in \cite{Zinkevich2003}. In the second step, we use the 
approximation Theorem \ref{thm:lds_approximation} to find for each 
$L \in S$ an appropriate $\theta_L \in \mathcal{D}$, such that the 
predictions $f_{t,L}$ are approximated by $\hat{y}_t(\theta_L)$. It 
follows from this step, that the best Kalman filter performs 
approximately as well as the best $\theta_L$. Specifically, we have 
\begin{equation}
\label{eq:regret_sketch_st2}
\min_{L \in S} \sum_{t=0}^T \ell(Y_t,\hat{y}_t(\theta_L))
\leq \min_{L \in S} \sum_{t=0}^T \ell(Y_t,f_{t}(L)) + \eps T
\end{equation}
Because by construction $\theta_L \in \mathcal{D}$, clearly it holds that 
\begin{equation*}
\min_{\phi \in \mathcal{D}} \sum_{t=0}^T 
\ell(Y_t,\hat{y}_t(\phi)) \leq 
\min_{L \in S} \sum_{t=0}^T \ell(Y_t,\hat{y}_t(\theta_L)),
\end{equation*}
and therefore combining (\ref{eq:regret_sketch_st1}) and 
(\ref{eq:regret_sketch_st2}) yields the statement of Theorem 
\ref{thm:regret_bound}. 

\section{Experiments}
\label{sec:experiments}

To illustrate our results, we present experiments on a few well-known 
examples in the Supplementary Material.
Out of those, we chose one to present here:

\begin{example}[Adapted from   \cite{hazan2017online}]
Consider the system:
\begin{align}
\label{eq:experem1_system_hazan}
G = \diag([0.999,0.5]), \quad F' = [1, 1],
\end{align}
with process noise distributed as 
$\omega_t \sim \mathcal{N}(0, w\cdot Id_2)$ and observation noise 
$\nu_t \sim \mathcal{N}(0, v)$ for different choices of $v,w>0$. 
\label{HazanEx}
\end{example}

In Figure~\ref{fig1brief}, we compare the prediction error for 4 methods: 
the standard baseline last-value prediction $\hat{y}_{t+1} := y_t$, also 
known as persistence prediction, the spectral filtering of 
\cite{hazan2017online}, Kalman filter, and AR(2). Here AR(2) is the 
truncation of Kalman filter, given by (\ref{eq:base_prediction}) with 
regression depth $s=1$ and no remainder term. Average error over 100 
observation sequences generated by (\ref{eq:experem1_system_hazan}) with 
$v=w=0.5$ is 
shown as solid line, and its standard deviation is shown a as shaded region.
Note that from some time on, spectral filtering essentially performs 
persistence prediction, since the inputs are zero. Further, note that both 
Kalman filter and AR(2) considerably improve upon the performance of last-
value prediction.

In Figure~\ref{figNoisebrief}, we compare the performance of AR(2) and 
Kalman filter under varying magnitude of noises $v,w$. 
In particular, colour indicates the ratio of the errors of Kalman filter to the errors of AR(2), wherein 
the errors 
are the average prediction error over 10 trajectories 
of (\ref{eq:experem1_system_hazan})
for each cell of the heat-map, with each trajectory of length 50. (The formula is given in the Supplementary Material.)
Consistent with our analysis, one can observe that increasing the variance of process noise 
improves the approximation of the Kalman filter by AR(2).

Finally, in Figure~\ref{figNoise2brief}, we illustrate the decay of the 
remainder term by presenting the mean (line) and standard deviation 
(shaded area) of the error as a function of the regression depth $s$. 
There, 4 choices of the covariance matrix $W$ of the process noise and the variance $v$ of the observation noise are considered within Example ~\ref{HazanEx} and the error is averaged over $N = 100$ runs of length $T = 200$.
  Of course, as expected, increasing $s$ 
decreases the error, until the error approaches that of the Kalman filter. 
Observe again that for a given value of the observation noise, the convergence 
w.r.t $s$ is slower for \textit{smaller} process noise, consistently 
 with our theoretical observations. 

\section{Conclusions}
We have presented a forecasting method, which is applicable to arbitrary sequences and comes with a regret bound competing against a class of methods, which includes Kalman filters.

We could generalise the results. 
 First, since Theorem 
\ref{thm:lds_approximation} provides approximation in absolute value for 
every large-enough $t$, our regret bounds may be easily extended to other 
losses. Second, for simplicity, we have considered only bounded sequences. 
While this is a standard assumption in the literature, it is somewhat 
restrictive, since, at least theoretically,  LDS observations may grow at a rate of  $\sqrt{t}$. For this reason, we have given the 
approximation theorem also for bounded-difference (Lipschitz) sequences,
and the regret results may be extended to this setting as well. 
One could also provide regret bounds for special cases of LDS, as surveyed by \cite{RG1999}.

\clearpage
\bibliographystyle{aaai}
\bibliography{exp_forgetting}

\clearpage
\newpage

\appendix
\section{Proof Of Theorem \ref{thm:regret_bound}}
\begin{proof}
Let $\eps>0$ be given. For every $L \in S$, let $s(L),\theta(L),T_0(L)$
be the approximation rank, the approximating $\theta$, and convergence  
times for which the approximation (\ref{eq:bounded_approx_in_thm_approx})
holds by Theorem \ref{thm:lds_approximation}. Set $s=\max_{L\in S} s(L)$, 
$T_0 = \max_{L\in S} T_0(L)$, and consider all $\theta(L)$ as vectors in 
$\RR^s$, by padding $\theta(L)$ with zeros if necessary. Then we have 
uniform approximation,
\begin{equation}
\label{eq:thm_regret_uniform_lds_approx}
\Abs{f_t(L) - \sum_{i=0}^{s-1} \theta_i(L) Y_{t-1-i}} \leq \eps
\end{equation}
for all $B_0$ bounded sequences $Y_t$, all $L \in S$ and all $t\geq T_0$. 
We set $D=\max |\theta(L)|$ and apply Algorithm \ref{alg:OGD} with 
parameters $s$ and $D$ to the sequence $Y_t$.

Using the standard results, Theorem 1 of 
\cite{Zinkevich2003}, we have for every $T>0$, 
\begin{equation}
\label{eq:thm_regret_linear_regret}
\sum_{t=0}^T \ell_t(\theta_t) - \min_{\theta \in \mathcal{D}} 
\sum_{t=0}^T\ell_t(\theta) \leq  2(D^2 + B_0^2) \sqrt{T}. 
\end{equation}
These are bounds quantify the performance of Algorithm \ref{alg:OGD} 
against the best $\theta \in D$. We now proceed to compare the loss 
of the best $\theta \in D$ with the loss of the best Kalman filter. 

In what follows we assume $T \geq T_0$. Indeed, since $T_0$ is independent of $T$, 
the sequences are bounded, and the family $S$ is finite, the whole regret 
up to time $T_0$ can be dominated by a constant, $C_{S,1}$. 

It then remains to observe that 
\begin{align}
\label{eq:thm_regret_L_to_D_general}
\min_{L \in S} \sum_{t=0}^T \ell(Y_t,f_{t}(L)) \geq 
\min_{\theta \in \mathcal{D}} \sum_{t=0}^T \ell_t(\theta) -C_S - \eps T. 
\end{align}
To see this, write 
\begin{align*}
\min_{L \in S} \sum_{t=0}^T \ell(Y_t,f_{t}(L)) =  
\min_{L \in S} \sum_{t=0}^{T_0} \ell(Y_t,f_{t}(L)) + 
\sum_{t=T_0}^T \ell(Y_t,f_{t}(L)). 
\end{align*}

First, note that there is a constant $C_{S,2}$ such that 
\begin{equation}
\label{eq:thm_regret_bound_to_T_0}
\max_{L \in S} \sum_{t=0}^{T_0} \Brack{Y_t-\hat{y}_t(\theta_L)}^2 \leq C_{S,2}.
\end{equation}
Again, this follows by the boundedness of the observations and finiteness 
of $S$. 
Next, for every $L \in S$, by (\ref{eq:thm_regret_uniform_lds_approx}) we have
\begin{align}
&\sum_{t=T_0}^T \ell(Y_t,f_{t}(L)) =  
\sum_{t=T_0}^T \Brack{Y_t-f_{t}(L)}^2  \\
&\geq 
\sum_{t=T_0}^T \Brack{Y_t-\hat{y}_t(\theta_L)}^2  -2B_0 \eps (T-T_0) \\
& \geq 
\sum_{t=0}^T \Brack{Y_t-\hat{y}_t(\theta_L)}^2  - C_{S,2} -2B_0 \eps T. 
\end{align}
Therefore, 
\begin{align}
&\min_{L \in S} \sum_{t=0}^T \ell(Y_t,f_{t}(L))  \\
&\geq \min_{L \in S} \sum_{t=0}^T \Brack{Y_t-\hat{y}_t(\theta_L)}^2  
- C_{S,2} -2B_0 \eps T \\
&\geq \min_{\theta \in \mathcal{D}} \sum_{t=0}^T 
\Brack{Y_t-\hat{y}_t(\theta)}^2  - C_{S,2} -2B_0 \eps T. 
\end{align}
Setting $C_S = C_{S,1} + C_{S,2}$ we therefore obtain (\ref{eq:thm_regret_L_to_D_general}). 

Combining (\ref{eq:thm_regret_L_to_D_general}) with 
(\ref{eq:thm_regret_linear_regret}) completes the proof. 
\end{proof}

\section{Proof of Theorem \ref{thm:lds_approximation}, Lipschitz Case}
\begin{proof}
With the notation of the proof of Theorem \ref{thm:lds_approximation} in the bounded case, we now consider the analog of (\ref{eq:lds_approx_thm_abs_bound_bounded_case}) for the Lipschitz case.
In the Lipschitz case we have 
\begin{align}
&\Abs{f_{t+1} - \sum_{j=0}^s Y_{t-j} \theta_j} \leq \nonumber \\
& \Abs{Y_t} \Abs{r_t - \theta_0} + \sum_{j=0}^{s-1} \Abs{Y_{t-j-1}} \Abs{r_{t-j-1} - \theta_{j+1}}   \nonumber \\
& +  (\rho_L')^s c_{1,L}\Brack{\Abs{Y_{t}} + sB_1 + c_{2,L}}. 
\end{align}
Choose $s$ large enough so that $(\rho_L')^s c_{1,L} \leq \delta/2$ and 
$(\rho_L')^s c_{1,L}  \Brack{ sB_1 + c_{2,L}} \leq \eps/2$.  Write 
\begin{equation}
\Abs{Y_{t-j-1}} \leq |Y_t| + (j+1)B_1. 
\end{equation}
Then 
\begin{align}
&\Abs{f_{t+1} - \sum_{j=0}^s Y_{t-j} \theta_j} \leq  \\
\label{eq:thm_approx_l1}
& \Abs{Y_t} \Abs{r_t - \theta_0} + \\
\label{eq:thm_approx_l2}
& \Abs{Y_{t}} \sum_{j=0}^{s-1}  \Abs{r_{t-j-1} - \theta_{j+1}} +  \\
\label{eq:thm_approx_l3}
&\sum_{j=0}^{s-1}  (j+1)B_1\Abs{r_{t-j-1} - \theta_{j+1}} + \\
& \half \delta \Abs{Y_t} + \half \eps. 
\end{align}
Choosing $T_0$ large enough so that for all $t\geq T_0$ we have 
(\ref{eq:thm_approx_l1})+(\ref{eq:thm_approx_l2})+(\ref{eq:thm_approx_l3}) is smaller than 
$\eps/2 + \half \delta \Abs{Y_t}$,
which completes the proof for the Lipschitz 
case.
\end{proof}

\clearpage
\section{Additional Experimental Results}

In this section we present additional experimental results in the  
comparison of different prediction methods.

We first continue the Example \ref{HazanEx} form the main body of the 
paper, with a system given by (\ref{eq:experem1_system_hazan}) and 
$v=w=0.5$. Figure \ref{fig1}(right) shows a sample observations 
trajectory of the system, together with forecast for the four methods. 
Figure \ref{fig1}(left) show the mean and standard deviations of the 
errors for the first 500 time steps. Figure \ref{fig1brief} in the main 
text is the restriction of this Figure \ref{fig1}(left) to the first 20 
steps. Similarly to Figure \ref{fig1brief}, we observe that the AR(2) 
predictions are better than the spectral and persistence methods, and 
worse than the Kalman filter, since only two first terms are considered.

Next, in Figure~\ref{figNoise}, we extend the experiments described in 
Figure \ref{figNoisebrief}. Namely, we compare the performance of AR(2), 
AR(4), and AR(8) against Kalman filter, while consider varying the  
variances $v,w$ of the noise terms. 

On the left, colour indicates the average over 10 trajectories of the root 
mean square error (RMSE) of AR$(s+1)$. In the middle, the colour indicates the average over 10 trajectories
of the difference in absolute value between the RMSE of the AR$(s+1)$ 
and that of Kalman filter.
On the right, the colour indicates the ratio:
\begin{align}
\label{eq:ratioapp}
\frac{
\sum_{i}^{10} \sum_{t}^{T} \ell(Y_{i,t},f_t(L)))
}{
\sum_{i}^{10} \sum_{t}^{T} \ell(Y_{i,t},\hat{y}_{i,t}(\theta_{AR(2)}))
}
\end{align}
where $f_t(L)$ denotes the prediction of the Kalman filter with the ground 
truth system parameters. The sum is over $i=1 \ldots 10$ sample paths 
${Y_{i,t}}$, over time $t = 1 \ldots 50$ for each cell of the heat-map. 
This corroborates the analytical result that process noise leads to an  
increase in the error (left), but also to the increase of the 
approximation ratio (right).

\begin{figure*}[t!]
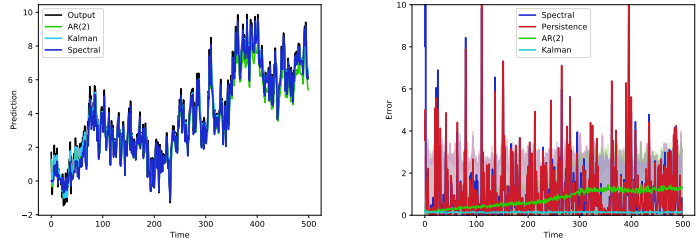

    \centering
\includegraphics[page=41, width=0.4\textwidth]{AR}  
\includegraphics[page=101, width=0.4\textwidth]{AR}        
    \caption{Illustrations on Example ~\ref{HazanEx}. 
    Left: sample outputs and predictions with AR($2$), compared against Kalman filter, last-value prediction, and spectral filtering of \cite{hazan2017online}.
Right: Same as Figure \ref{fig1brief}, over longer time period. 
    }\label{fig1}
\end{figure*}

\begin{figure*}[t!]
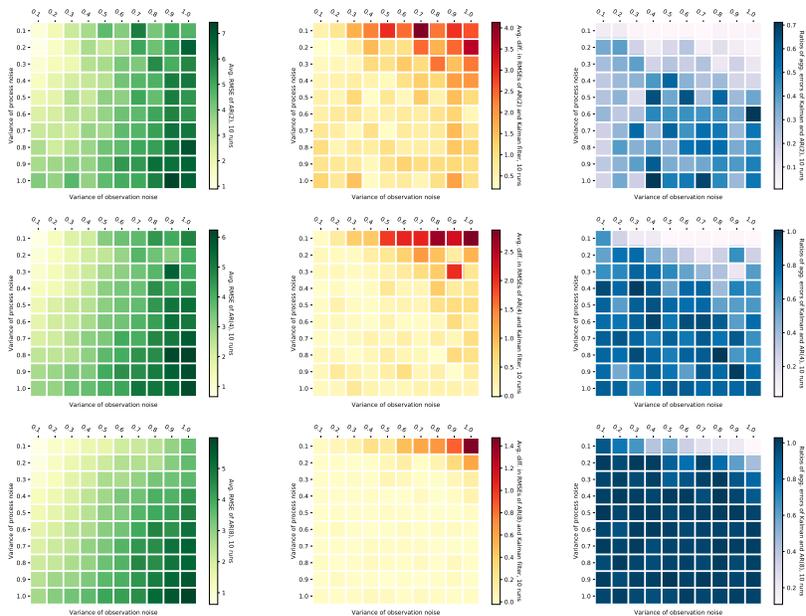

    \centering
 \includegraphics[page=1, width=0.3\textwidth]{noise}  
        \includegraphics[page=2, width=0.3\textwidth]{noise}   
        \includegraphics[page=3, width=0.3\textwidth]{noise}  \\
 \includegraphics[page=7, width=0.3\textwidth]{noise}  
        \includegraphics[page=8, width=0.3\textwidth]{noise}   
        \includegraphics[page=9, width=0.3\textwidth]{noise}  \\ 
 \includegraphics[page=10, width=0.3\textwidth]{noise}  
        \includegraphics[page=11, width=0.3\textwidth]{noise}   
        \includegraphics[page=12, width=0.3\textwidth]{noise}          
\caption{The effect of varying the  magnitude of noise in Example ~\ref{HazanEx} on AR(2) (top), AR(4) (middle), and AR(8) (bottom). 
Left: average RMSE of predictions of  AR($s+1$) as a function of the variance of the process noise (vertical axis) and observation noise (horizontal axis).
Center: The differences in average RMSE of Kalman filters and AR($s+1$) as a function of the variance of the process noise (vertical axis) and observation noise (horizontal axis). 
Throughout averages are taken over 10 runs.
Right: The ratio \eqref{eq:ratioapp} of the errors of Kalman filters and AR($s+1$) as a function of the variance of the process noise (vertical axis) and observation noise (horizontal axis).  Throughout, notice the origin is in the top-left corner. 
    }\label{figNoise}
\end{figure*}

Next, we consider a real-world dataset:

\begin{example}[Cited from  \cite{arima_aaai}]
Consider the time-series of daily index of Dow Jones Industrial Average 
for years 1885--1962. There are three versions of the sequence: d0 
represents the index, and d1 and d2 first and second order differencing of 
the series.
\label{realistic}
\end{example}

We compare the performance of the online gradient descent Algorithm 1 to 
that of the spectral filtering of \cite{hazan2017online}. 

It is clear from our earlier discussion of the run-time of the methods (p. 
\pageref{runtimediscussion} in the main body of the paper) that the run-
time of the spectral method grows too fast to handle the complete time 
series.
Hence we concentrate on short sub-sequences, where the comparison is can be made.

For the spectral filter, we consider $k = 5$ 
filters, but the performance does not seem to have much impact on the 
performance. For Algorithm \ref{alg:OGD}, we learn an AR(2) model ($s=2$ 
in the notation of Algorithm \ref{alg:OGD}). We use diameter $D=1$
 and learning rate $\eta_t = c/\sqrt{t}$. 
As it turns out, in many settings, the choice $c = 1$
performs well, but especially when there is a clear trend in the 
data, a higher $c$ may be desirable. 
This is illustrated in Figure \ref{figImpactC} on sequence d0. 
As can be seen from the top-left plot, low values of $c$ may lead to slow 
convergence and hence high errors initially.
It may hence be preferable to 
increase the $c$, as illustrated on the right.
We note that for the (essentially stationary) sequences d1 and d2 
from Example ~\ref{realistic}, $c= 1$ works well.

In Figure \ref{fig2} we compare the predictive performance of Algorithm 
\ref{alg:OGD} to spectral filtering and to the persistence (last seen 
value) predictor on the first $T = 100$ elements of the 
three sequences of Example~\ref{realistic}. The errors of spectral 
filtering are two to three orders of magnitude larger than for the last-
value prediction (940 vs. 1.47, 299 vs. 3.58, 4689 vs. 11.02). While some 
of this is due to the large spikes, the errors are non-negligible 
throughout. Algorithm \ref{alg:OGD} performs substantially better than 
either method used for comparison.

\begin{figure*}[t!]
    \centering
 \includegraphics[width=0.3\textwidth,page=1]{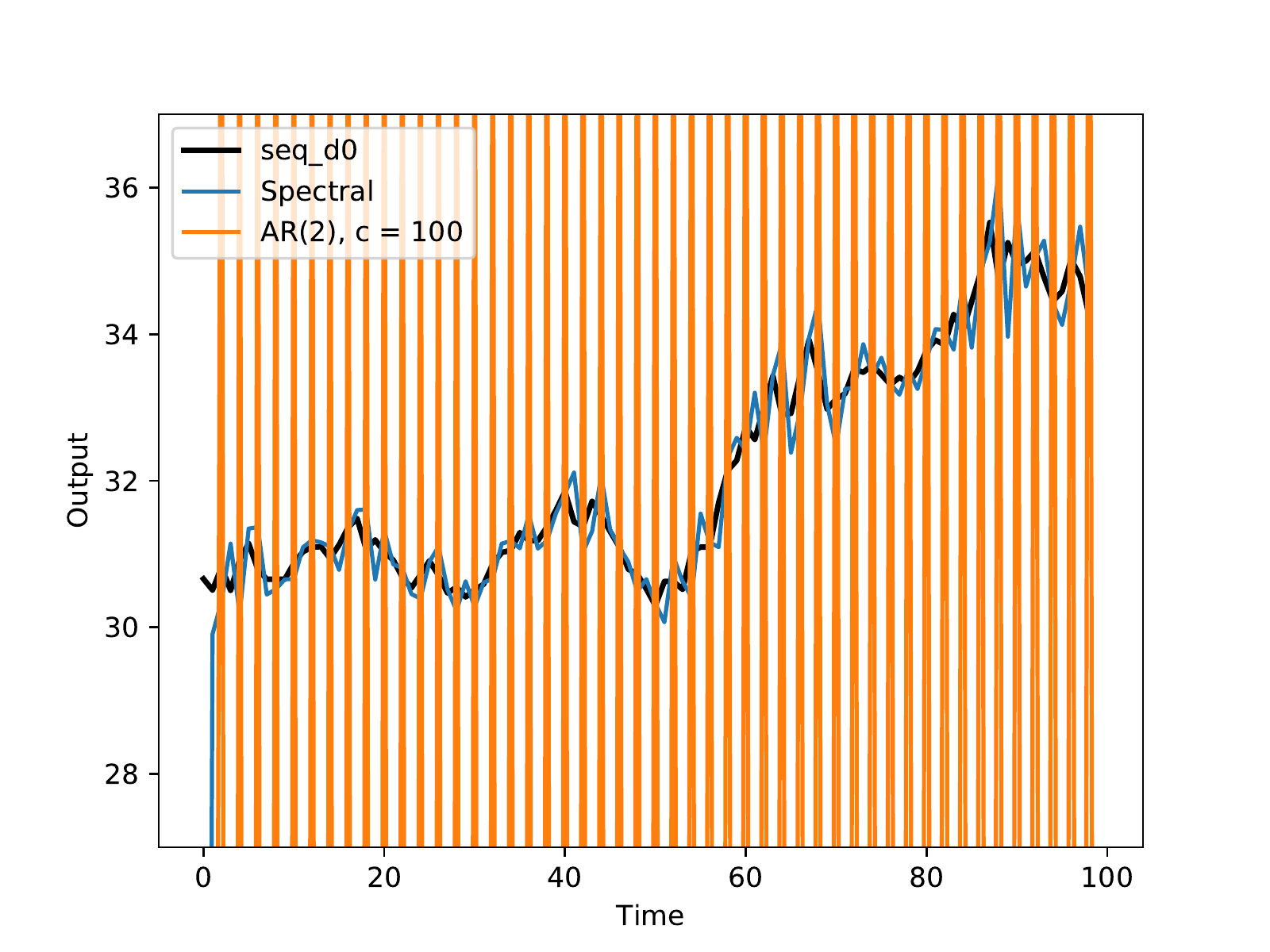}
 \includegraphics[width=0.3\textwidth,page=3]{seq0-short-k5}
 \includegraphics[width=0.3\textwidth,page=5]{seq0-short-k5}\\
  \includegraphics[width=0.3\textwidth,page=2]{seq0-short-k5}
 \includegraphics[width=0.3\textwidth,page=4]{seq0-short-k5}
 \includegraphics[width=0.3\textwidth,page=6]{seq0-short-k5}
\caption{An illustration of the impact of constants in the learning rate on Example ~\ref{realistic}, the well-known time series. Top: The forecasts for three different values of $c$.
Bottom: The error for three different values of $c$.
    }\label{figImpactC}
\end{figure*}

\begin{figure*}[t!]
    \centering
  \includegraphics[width=0.3\textwidth,,page=5]{seq0-short-k5}
\includegraphics[width=0.3\textwidth]{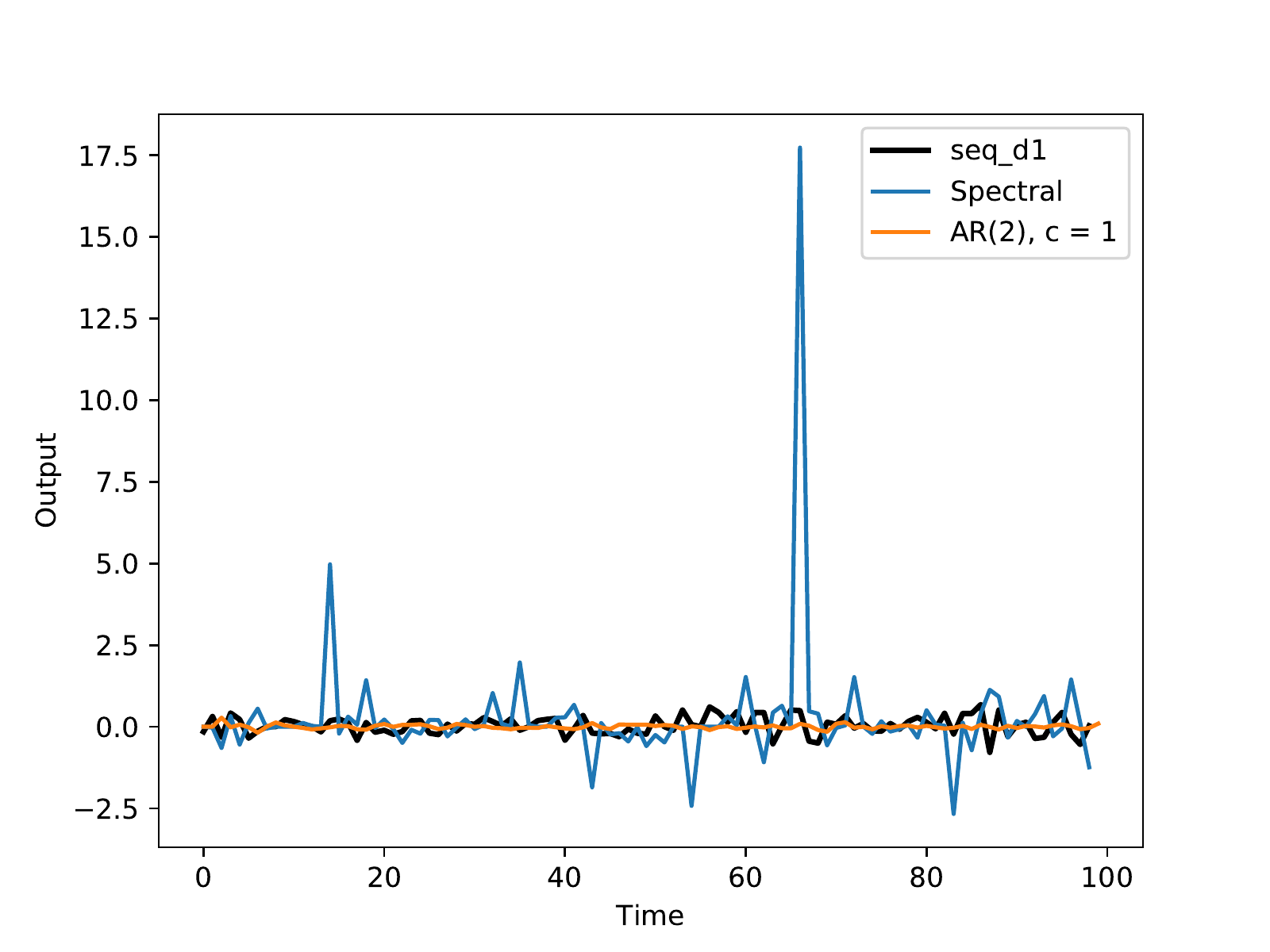}
\includegraphics[width=0.3\textwidth]{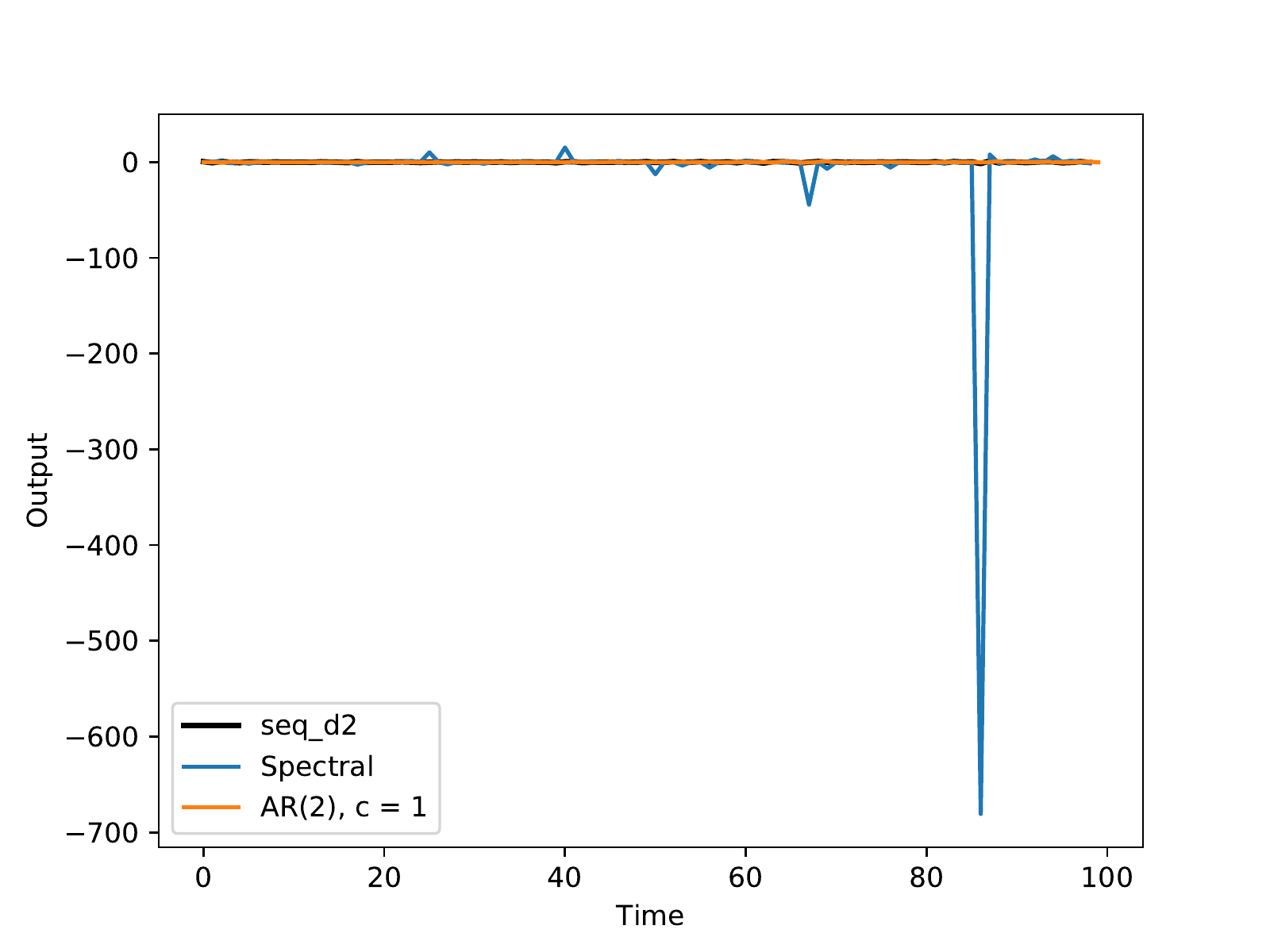} \\
 \includegraphics[width=0.3\textwidth,page=6]{seq0-short-k5}
\includegraphics[width=0.3\textwidth,page=2]{seq1-short-k5}
\includegraphics[width=0.3\textwidth,page=2]{seq2-short-k5}
    \caption{Illustrations on Example ~\ref{realistic}, the well-known time series.
    Top: the predictions of AR$(2)$ compared with the predictions of the spectral filter of \cite{hazan2017online} and the trivial last-value prediction on the first $T = 100$ elements of series d0 (left), d1 (center), and d2 (right).
    Bottom: the corresponding errors.
    }\label{fig2}
\end{figure*}

\end{document}